\documentclass[11pt,reqno]{amsart}
\usepackage{amsmath, amssymb, amsthm}
\usepackage{url}
\usepackage{mathrsfs}
\usepackage[ansinew]{inputenc}
\usepackage[breaklinks]{hyperref}
\usepackage{fancyhdr}

\usepackage{caption}   
\theoremstyle{plain}
\theoremstyle{definition}
\newtheorem{definition}{Definition}[section]

\usepackage{multirow}

\allowdisplaybreaks

\renewcommand{\(}{\left\(}
\renewcommand{\)}{\right\)}
\renewcommand{\[}{\left\[}
\renewcommand{\]}{\right\]}
\newtheorem{remark}[]{Remark}
\numberwithin{equation}{section}
\theoremstyle{plain}
\newtheorem{theorem}{Theorem}[section]

\newcommand{\sm}{\left(\begin{smallmatrix}}
	\newcommand{\esm}{\end{smallmatrix}\right)}

\newtheorem{corollary}[theorem]{Corollary}

\newtheorem{example}[]{Example}

\makeatletter
\def\proof{\@ifnextchar[{\@oproof}{\@nproof}}
\def\@oproof[#1][#2]{\trivlist\item[\hskip\labelsep\textit{#2 \textbf{Proof of}\
		#1.}~]\ignorespaces}
\def\@nproof{\trivlist\item[\hskip\labelsep\textit{Proof.}~]\ignorespaces}

\makeatother

\usepackage{color}
\usepackage{amsmath}

\definecolor{blue}{rgb}{0,0,1}
\definecolor{red}{rgb}{1,0,0}
\definecolor{green}{rgb}{0,.6,.2}
\definecolor{purple}{rgb}{1,0,1}

\long\def\red#1\endred{{\color{red}#1}}
\long\def\blue#1\endblue{{\color{blue}#1}}
\long\def\purple#1\endpurple{{\color{purple}#1}}
\long\def\green#1\endgreen{{\color{green}#1}}

\begin{document}
	\title[On $k$-color mex-related partition functions]{On $k$-color mex-related partition functions}

	
	\author{Nargish Punia}
	\address{Department of Mathematics, Indian Institute of Technology, Roorkee-247667, Uttarakhand, India}
	\email{nargish@ma.iitr.ac.in}
	
	\subjclass[2020]{Primary 11P81, 11P84, 05A17, 05A19}
	\keywords{Minimal excludant function, Integer partitions, Two color partitions, Overpartitions, $k$-color partitions.}
	\maketitle
	\pagenumbering{arabic}
	\pagestyle{headings}
	\begin{abstract}
		In this paper, we investigate a set of $k$-color partitions and examine how its subsets relates to the theory of the minimal excludant function introduced by Andrews and Newman. Furthermore, we provide generalization of the partition functions introduced by Andrews and Bachraoui and also define several new partition functions within this framework.
		
	\end{abstract}
	
	\section{Introduction}\label{intro}
Fraenkel and Peled \cite{mex1} originally defined the minimal excludant function (mex-function) of a set $S$ of non-negative integers, which is defined as the smallest non-negative integer that is not contained in $S$:
	$$\text{mex}(S):=\text{min}(\mathbb{Z}_{\ge0}\backslash S).$$
	
 The mex-function appears extensively in combinatorial game theory, particulary within the Sprague-Grundy theory. It is used to analyze impartial games like Nim (see \cite{mex1}, \cite{mex2}). This function seems to be considered first by Grabner and Knopfmacher \cite{grap} in 2006, who call it the smallest gap. They defined the smallest gap of an integer partition as the smallest integer which is not a part of the partition. In 2011, Andrews \cite{andr} related the minimal excludant function to the Frobenius symbol representation of partitions. In this direction, Andrews and Newman \cite{mex} and Hopkins and Sellers \cite{hop} found the relation between the minimal excludant function and Dyson's statistic crank. Very recently, Andrews and Newman \cite{mex} defined the minimal excludant function for an integer partition $\lambda$ i.e $\text{mex}_{A,a}(\lambda)$ to be the smallest integer $\equiv a\ (\text{mod}\  A)$ which is not a part of $\lambda$. Then they defined $p_{A,a}(n)$
	(resp. $\bar{p}_{A,a}(n))$ as the number of partitions $\lambda$ of $n$ with
	$$\text{mex}_{A,a}(\lambda)\equiv a \ \text{mod}\ (2A)\ \ \ (\text{resp}.\ \text{mex}_{A,a}(\lambda)\equiv a+A \ \text{mod}\ (2A)) .$$
	Also Andrews and Newman in \cite{newman2} defined,
	$$\sigma\text{mex}(n):=\sum_{\lambda \vdash n}^{}{\text{mex}(\lambda)},$$
	where the sum is over all the partitions of $n$, and they obtained the following result:
	\begin{equation}\label{d}
		\sigma\text{mex}(n)=D_2(n),
	\end{equation}			
	where $D_2(n)$ denote the number of two color partitions of $n$ into distinct parts.
The generating function for identity \eqref{d} was established by Grabner and Knopfmacher \cite{grap} in their investigation of the smallest gap. For recent developments in this area, see {\cite{b1,b2,b3,b4}, to name a few.
	Very recently, Andrews and Bachraoui \cite{twocolor} extend this framework to include partitions with two colors (red and blue), applying specific parity and color-based constraints to the parts, for instance, allowing even parts to appear only in blue. In this paper, we remove the restrictions on even parts and then generalize it to $k$-color partitions, subject to specific counting rules. More precisely, we consider a sequence of partitions with parts into $k$-colors (say $color(1)$, $color(2)$,\dots$color(k)$) with some restrictions on counting of parts in $color(k)$ and establish connections to the theory of mex-function. We will also add some new partition function within the framework given by Andrews and Bachraoui \cite{twocolor} and also give the generalizations of the partition function discussed by them.
	 
	 Throughout the paper, we use the following standard $q$-series notations,
	 $$ (a; q)_0 = 1, \quad (a; q)_n = \prod_{j=0}^{n-1} (1 - aq^j), \quad (a; q)_\infty = \prod_{j=0}^{\infty} (1 - aq^j), \quad |q|<1,$$
	 $$(a_1, \dots, a_k; q)_\infty = \prod_{j=1}^{k} (a_j; q)_\infty. $$
	  Before proceeding further, we introduce certain partition classes.
	
\subsection{New Classes Of Partitions}\label{n}
\begin{definition}
	For $n\ge0$, let $\mathcal{C}_k(n)$ denote the set of partitions of $n$ into $k$-colors (say $color(1)$, 
	$color(2)$,\dots$color(k)$) and $|\mathcal{C}_k(n)|=C_k(n)$, where $|\mathcal{C}_k(n)|$ denote the cardinality of set $\mathcal{C}_k(n)$. Then clearly,
	$$\sum_{n=0}^{\infty}C_k(n)q^n=\frac{1}{(q;q)_{\infty}^k}.$$
\end{definition}
\begin{definition}\label{def}
	For $n\ge0$ and $k\ge1$, let ${C}_{k}^e(n)$ (resp. ${C}_{k}^o(n)$) denote the number of partitions in $\mathcal{C}_{k}(n)$ such that parts in $color(k)$ are even (resp. odd) in counting.
\end{definition}
Note that ${C}_{k}^e(n)$ (resp. ${C}_{k}^o(n)$) can be considered as a generalization of $P_e(n)$ (resp. $P_o(n)$) and clearly
$$C_1^e(n)=P_e(n)\  \text{and}\  C_1^o(n)=P_o(n),$$
where $P_e(n)$ (resp. $P_o(n)$) denote the number of partitions of $n$ having even (resp. odd) number of parts.

To establish the connection of these partition function with mex-theory, let 
 $A$ be a positive integer and $a$ be a non-negative integer. For any $\lambda \in \mathcal{C}_k(n)$, we define $\text{mex}_{A,a}(\lambda_k)$ be the smallest integer $\equiv a\ \text{mod}\ (A)$ which is not a
part in $color(k)$ of $\lambda$. Accordingly, let $\mathcal{P}_{A,a}(n,k)$ (resp. $\mathcal{\bar{P}}_{A,a}(n,k)$) denote the number of
partitions $\lambda$ in $\mathcal{C}_k(n)$ with
$$\text{mex}_{A,a}(\lambda_k)\equiv a \ \text{mod}\ (2A)\ \ (\text{resp.} \ \text{mex}_{A,a}(\lambda_k)\equiv a+A\ \text{mod}\ (2A)).$$
%
 Before stating our next results, we provide an illustrative example to clarify Definitions \ref{def}.
 \begin{example}	
 We examine the case where $n=3$, $
 	k=2$ and $A=2$, $a=1$. Let us denote part $\lambda$ in $color(1)$ by $\lambda_b$ and in $color(2)$ by $\lambda_r$ with order $\lambda_b\ge \lambda_r$.
 	\begin{center}
 		\renewcommand{\arraystretch}{1.2}
 		\begin{table}[htbp]
 			\centering
 			\renewcommand{\arraystretch}{1.2}
 			\begin{tabular}{c|cccc}
 				\hline
 				\textbf{$C_{2}(3)$} 
 				& \textbf{\hspace{0.9cm}$C_{2}^e(3)$} \qquad\qquad
 				& \textbf{\hspace{0.9cm}$C_{2}^o(3)$} \qquad\qquad
 				& \textbf{\hspace{0.9cm}$\mathcal{P}_{2,1}(3,2)$} \qquad\qquad
 				& \textbf{$\mathcal{\bar{P}}_{2,1}(3,2)$} \\
 				\hline
 				$3_r$           &  	$3_b$            &   	$3_r$         &	$3_r$        &	$1_r+2_b$           \\
 			$3_b$          &     	$1_r+2_r$        &    $1_r+2_b$       &	$3_b$ &    	$1_r+2_r$         \\
 			$1_r+2_b$          &$1_b+2_b$           &    	$1_b+2_r$       &   	$1_b+2_r$        &  	$1_r+1_r+1_r$             \\
 				$1_b+2_r$            &    	$1_b+1_r+1_r$         &    $1_r+1_r+1_r$         &	$1_b+2_b$    & 	$1_b+1_r+1_r$            \\
 				$1_r+2_r$       & 	$1_b+1_b+1_b$        &  	$1_b+1_b+1_r$          & 	$1_b+1_b+1_b$         & 	$1_b+1_b+1_r$            \\
 				$1_b+2_b$         &      &          &         &           \\
 				$1_r+1_r+1_r$      &            &   &        &           \\
 				$1_b+1_b+1_b$         &            &           &        &        \\
 					$1_b+1_r+1_r$         &            &           &        &        \\
 						$1_b+1_b+1_r$         &            &           &        &       \\
 				\hline
 			\end{tabular}
 				\caption{\ }
 			\label{t1}
 		\end{table}
 	\end{center}
Note that $C_{2}^e(3)=4=\mathcal{P}_{2,1}(3,2)$ and $C_{2}^o(3)=4=\mathcal{\bar{P}}_{2,1}(3,2)$. Our results, stated next, show that these identities hold for general $n$ and for any arbitrary number of colors $k$.
 \end{example}

\begin{theorem}\label{even}
For $n\ge 0$ we have,
$$C_k^e(n)=\mathcal{P}_{2,1}(n,k),$$
and
	$$C_{k}^o(n)=\mathcal{{\bar{P}}}_{2,1}(n,k).$$
\end{theorem}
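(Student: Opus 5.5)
The plan is to prove both identities at the level of generating functions. We split off the contribution of $color(k)$, since only the parts in $color(k)$ carry the parity or mex condition. The parts in $color(1),\dots,color(k-1)$ are unrestricted and contribute the common factor $1/(q;q)_\infty^{k-1}$ on both sides. So everything reduces to a one-color identity: the partitions filling $color(k)$ with an even number of parts should be equinumerous with those for which $\text{mex}_{2,1}\equiv 1 \pmod 4$. This is essentially the $k=1$ case of Andrews--Newman.

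\textbf{Step 1 (parity side).} Tracking the sign $(-1)^{\#\text{parts}}$ in $color(k)$ gives
$$\sum_{n\ge0}\big(C_k^e(n)-C_k^o(n)\big)q^n=\frac{1}{(q;q)_\infty^{k-1}}\cdot\frac{1}{(-q;q)_\infty}.$$
Adding this to $\sum_n C_k(n)q^n = 1/(q;q)_\infty^k$ yields
$$\sum_{n\ge0}C_k^e(n)q^n=\frac{1}{2(q;q)_\infty^{k-1}}\left(\frac{1}{(q;q)_\infty}+\frac{1}{(-q;q)_\infty}\right).$$

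\textbf{Step 2 (mex side).} For a fixed $m\ge0$, $\text{mex}_{2,1}(\lambda_k)=2m+1$ means that $1,3,\dots,2m-1$ all occur in $color(k)$ and $2m+1$ does not. The generating function for such $color(k)$ components is $q^{m^2}(1-q^{2m+1})/(q;q)_\infty$. The condition $2m+1\equiv 1 \pmod 4$ means $m$ is even, so
$$\sum_{n\ge0}\mathcal{P}_{2,1}(n,k)q^n=\frac{1}{(q;q)_\infty^{k}}\sum_{m\ \text{even}}\left(q^{m^2}-q^{(m+1)^2}\right)=\frac{1}{(q;q)_\infty^{k}}\sum_{j\ge0}(-1)^jq^{j^2}.$$

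\textbf{Step 3 (theta identity).} Next we invoke Gauss's identity
$$1+2\sum_{j\ge1}(-1)^jq^{j^2}=\frac{(q;q)_\infty}{(-q;q)_\infty},$$
which follows from Jacobi's triple product. It gives
$$\sum_{j\ge0}(-1)^jq^{j^2}=\tfrac12\left(1+\frac{(q;q)_\infty}{(-q;q)_\infty}\right).$$
Substituting this into Step 2 reproduces exactly the expression in Step 1, which proves $C_k^e(n)=\mathcal{P}_{2,1}(n,k)$.

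\textbf{Step 4 (second identity).} Every $\lambda\in\mathcal{C}_k(n)$ has odd $\text{mex}_{2,1}(\lambda_k)$, so each $\lambda$ falls into exactly one of the classes counted by $\mathcal{P}_{2,1}(n,k)$ and $\mathcal{\bar P}_{2,1}(n,k)$. Likewise $C_k^e(n)+C_k^o(n)=C_k(n)$. Subtracting the first identity from these two decompositions gives $C_k^o(n)=\mathcal{\bar P}_{2,1}(n,k)$.

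The only nontrivial input is the theta-function identity in Step 3. The main care needed is in Step 2: the telescoping requires checking that the terms over even $m$ interleave as $q^{0}-q^{1}+q^{4}-q^{9}+\cdots$, so that the sum is the alternating series $\sum_{j\ge0}(-1)^jq^{j^2}$.
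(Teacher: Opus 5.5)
Your proposal is correct and follows essentially the paper's route. Both arguments use the same parity generating function for $C_k^e(n)$ and the same identity $(q;q)_\infty/(-q;q)_\infty=\sum_{n=-\infty}^{\infty}(-1)^nq^{n^2}$ from Jacobi's triple product. Both also use the same mex interpretation, namely that $q^{m^2}(1-q^{2m+1})/(q;q)_\infty$ generates $\operatorname{mex}_{2,1}(\lambda_k)=2m+1$.

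The one difference is the second identity. You obtain it by complementation, using $\mathcal{P}_{2,1}(n,k)+\bar{\mathcal{P}}_{2,1}(n,k)=C_k(n)=C_k^e(n)+C_k^o(n)$. The paper instead repeats the computation for $C_k^o(n)$ via $\sum_{n\ge0}q^{4n^2+4n+1}(1-q^{4n+3})$. Your complementation step is a valid shortcut.
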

This theorem yields a result by Andrews and Newman in \cite[Theorem $4$]{mex}, which can be stated as,
\begin{corollary}\label{c1}
	For $n\ge 0$ we have,
	$$\mathcal{P}_{2,1}(n,1)=C_1^e(n)=P_e(n),$$
	and
$$\mathcal{\bar{P}}_{2,1}(n,1)=C_1^o(n)=P_o(n).$$	
\end{corollary}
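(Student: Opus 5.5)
The plan is to reduce to the one-color case. A partition $\lambda\in\mathcal{C}_k(n)$ is a tuple $(\lambda^{(1)},\dots,\lambda^{(k-1)},\lambda_k)$. The first $k-1$ colors form an arbitrary $(k-1)$-colored partition of some $m$, and $\lambda_k$ is an ordinary partition of $n-m$. Both the parity condition defining $C_k^e(n)$, $C_k^o(n)$ and the mex condition defining $\mathcal{P}_{2,1}(n,k)$, $\bar{\mathcal{P}}_{2,1}(n,k)$ depend only on the $color(k)$ component. Hence
$$C_k^e(n)=\sum_{m=0}^{n} C_{k-1}(m)\,P_e(n-m),\qquad \mathcal{P}_{2,1}(n,k)=\sum_{m=0}^{n} C_{k-1}(m)\,p_{2,1}(n-m),$$
and similarly for the odd and barred versions. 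Here $p_{2,1}$ and $\bar p_{2,1}$ are the Andrews--Newman functions for ordinary partitions. In generating-function form, every series involved equals $(q;q)_\infty^{-(k-1)}$ times its one-color analogue. So the theorem is equivalent to the case $k=1$, that is, $P_e(n)=p_{2,1}(n)$ and $P_o(n)=\bar p_{2,1}(n)$.

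To keep the argument self-contained rather than just citing \cite[Theorem 4]{mex}, I will verify the $k=1$ case by generating functions.

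\textbf{Parity side.} We have
$$\sum_{n}(P_e(n)-P_o(n))q^n=\frac{1}{(-q;q)_\infty},$$
so
$$\sum_n P_e(n)q^n=\frac12\Big(\frac{1}{(q;q)_\infty}+\frac{1}{(-q;q)_\infty}\Big).$$

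\textbf{Mex side.} Take $A=2$, $a=1$. The condition $\mathrm{mex}_{2,1}(\lambda)=2j+1$ means that $1,3,\dots,2j-1$ all appear as parts and $2j+1$ does not. Its generating function is
$$\frac{q^{j^2}(1-q^{2j+1})}{(q;q)_\infty}.$$
The condition $\mathrm{mex}\equiv1 \pmod 4$ corresponds to $j$ even. Summing over even $j$ and using telescoping gives
$$\sum_n p_{2,1}(n)q^n=\frac{1}{(q;q)_\infty}\sum_{j\ge0}(-1)^j q^{j^2}.$$
The last step is to show
$$\frac{1}{(q;q)_\infty}\sum_{j\ge0}(-1)^jq^{j^2}=\frac12\Big(\frac{1}{(q;q)_\infty}+\frac{1}{(-q;q)_\infty}\Big).$$
This is equivalent to
$$2\sum_{j\ge0}(-1)^jq^{j^2}-1=\sum_{j\in\mathbb{Z}}(-1)^jq^{j^2}=\frac{(q;q)_\infty}{(-q;q)_\infty}.$$
That identity is Gauss's classical consequence of the Jacobi triple product. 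The odd/barred statement then follows by subtracting from $1/(q;q)_\infty$.

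The only nontrivial step is the theta identity $\sum_{j\in\mathbb{Z}}(-1)^jq^{j^2}=(q;q)_\infty/(-q;q)_\infty$, and I will take it as known. The one point needing care is the telescoping computation of the mex generating function, including the $j=0$ term, where $\mathrm{mex}=1$ corresponds to "no part equal to $1$".

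The corollary is then simply the case $k=1$, where $\mathcal{C}_0$ is the trivial class consisting of the empty partition.
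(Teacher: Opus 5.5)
Your argument is correct and is essentially the paper's. You sum the mex generating function $q^{j^2}(1-q^{2j+1})/(q;q)_\infty$ over even $j$ to get $\frac{1}{(q;q)_\infty}\sum_{j\ge0}(-1)^jq^{j^2}$, then match it to $\frac12\bigl(1/(q;q)_\infty+1/(-q;q)_\infty\bigr)$ via $\sum_{j\in\mathbb{Z}}(-1)^jq^{j^2}=(q;q)_\infty/(-q;q)_\infty$, which is exactly the proof of Theorem~\ref{even} specialized to $k=1$. The only cosmetic difference is that the paper also checks $C_1^e(n)=P_e(n)$ by expanding with Euler's identity $\sum_{n\ge0}t^n/(q;q)_n=1/(t;q)_\infty$ into $\sum_{n\ge0}q^{2n}/(q;q)_{2n}$, whereas you rightly treat that equality as definitional via the signed product $1/(-q;q)_\infty$.
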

Now, our next result establishes a direct connection between partition function $C_k(n)$ and  mex-function. We will assume that $C_k(n)=0$, when $n$ is not a non negative integer.
\begin{theorem}\label{new}
	$C_k(n)-C_k(n-1)$ denote the excess number of partitions counted by $\mathcal{P}_{2,1}(n,k)$ over those partitions with $\text{mex}_{A,a}(\lambda_k)>1$, i.e.
	
	For any positive integer $n$, we have 
	$$C_k(n)-C_k(n-1)=\mathcal{P}_{2,1}(n,k)-\mathcal{P}_{2,1}^{>1}(n,k),$$ 
	where $\mathcal{P}_{2,1}^{>1}(n,k)$ denote the number of partitions counted by $\mathcal{P}_{2,1}(n,k)$ with $\text{mex}_{A,a}(\lambda_k)>1.$
\end{theorem}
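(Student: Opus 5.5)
The plan is a direct combinatorial argument; no generating functions or Theorem \ref{even} should be needed. Throughout, $\text{mex}_{A,a}(\lambda_k)$ is read with $A=2$, $a=1$: it is the smallest odd integer that is not a part in $color(k)$ of $\lambda$. The first step is to split the partitions counted by $\mathcal{P}_{2,1}(n,k)$ by their mex value. Since the mex is the smallest odd positive integer not appearing in $color(k)$, every $\lambda\in\mathcal{C}_k(n)$ has either $\text{mex}_{2,1}(\lambda_k)=1$ or $\text{mex}_{2,1}(\lambda_k)\ge 3$. Because $1\equiv 1 \pmod 4$, every partition with mex equal to $1$ is counted by $\mathcal{P}_{2,1}(n,k)$. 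The remaining partitions counted by $\mathcal{P}_{2,1}(n,k)$ are exactly those counted by $\mathcal{P}^{>1}_{2,1}(n,k)$. Hence
$$\mathcal{P}_{2,1}(n,k)-\mathcal{P}_{2,1}^{>1}(n,k)=\#\{\lambda\in\mathcal{C}_k(n): \text{mex}_{2,1}(\lambda_k)=1\},$$
and the right-hand side is the number of $k$-color partitions of $n$ in which $1$ does not occur in $color(k)$.

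The second step is to show that this number equals $C_k(n)-C_k(n-1)$. I would use the standard map sending a partition in $\mathcal{C}_k(n-1)$ to the partition in $\mathcal{C}_k(n)$ obtained by adjoining one part $1$ in $color(k)$. This map is injective. Its image is precisely the set of partitions of $n$ containing at least one $1$ in $color(k)$, since the inverse simply deletes one such part. Therefore the partitions of $n$ with no $1$ in $color(k)$ number $C_k(n)-C_k(n-1)$. The convention $C_k(n-1)=0$ for negative arguments is consistent with this, and the statement assumes $n\ge 1$ anyway.

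As a sanity check, I would verify the identity at the level of generating functions. Forbidding the part $1$ in $color(k)$ gives the generating function $(1-q)/(q;q)_\infty^{k}$, whose coefficient of $q^n$ is $C_k(n)-C_k(n-1)$. There is no real obstacle in this argument. The only point needing care is interpreting $\text{mex}_{A,a}(\lambda_k)>1$ correctly as ``$1$ appears in $color(k)$'', and confirming that mex equal to $1$ falls in the class $a \pmod{2A}$ rather than $a+A$.
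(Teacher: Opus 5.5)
Your proof is correct and is essentially the paper's argument read combinatorially. The paper writes $(1-q)/(q;q)_\infty^k$ as the mex-decomposed generating function for $\mathcal{P}_{2,1}(n,k)$ minus its terms with mex $\ge 5$, so only the mex $=1$ term $\frac{1}{(q;q)_\infty^{k-1}}\prod_{j\ge 2}(1-q^j)^{-1}$ survives, and this matches your decomposition together with your ``adjoin a part $1$ in $color(k)$'' bijection.
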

This gives us the following corollary.
\begin{corollary}\label{new1}
	For $n\ge1$, there holds
	$$P_e(n)-P(n)+P(n-1)=\mathcal{P}_{2,1}^{>1}(n,1),$$
	where $P(n)$ denotes the total number of partitions of $n$.
\end{corollary}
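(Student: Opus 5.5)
The plan is to obtain the identity by specializing Theorem~\ref{new} to a single color, $k=1$, and then rewriting each term using the one-color identifications already recorded in the paper. No new combinatorics or generating-function work should be needed.

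\textbf{Step 1.} Set $k=1$ in Theorem~\ref{new}. For every $n\ge 1$ this gives
$$C_1(n)-C_1(n-1)=\mathcal{P}_{2,1}(n,1)-\mathcal{P}_{2,1}^{>1}(n,1).$$

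\textbf{Step 2.} Identify $C_1$ with $P$. With one color, $\mathcal{C}_1(n)$ is just the set of ordinary partitions of $n$. Its generating function is $1/(q;q)_\infty$, so $C_1(n)=P(n)$ for all $n$, and also $C_1(n-1)=P(n-1)$. The convention $C_k(m)=0$ for negative $m$ matches $P(m)=0$ there.

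\textbf{Step 3.} Invoke Corollary~\ref{c1}, which gives $\mathcal{P}_{2,1}(n,1)=P_e(n)$. Substituting into Step~1 yields
$$P(n)-P(n-1)=P_e(n)-\mathcal{P}_{2,1}^{>1}(n,1).$$
Moving terms across gives the claimed $P_e(n)-P(n)+P(n-1)=\mathcal{P}_{2,1}^{>1}(n,1)$.

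\textbf{Consistency check.} There is no real obstacle; the only point needing care is that the hypothesis $n\ge 1$ of Theorem~\ref{new} matches the range in the corollary. It is still reassuring to confirm the identity directly for $k=1$.

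\begin{itemize}
\item A partition counted by $\mathcal{P}_{2,1}(n,1)$ with $\text{mex}_{2,1}=1$ is exactly one with no part equal to $1$. These number $P(n)-P(n-1)$, since removing a part $1$ gives a bijection from partitions of $n$ containing a $1$ onto partitions of $n-1$.
\item Every partition with no part $1$ has $\text{mex}_{2,1}=1\equiv 1 \pmod 4$, so all of them are counted by $\mathcal{P}_{2,1}(n,1)$.
\item Hence $\mathcal{P}_{2,1}(n,1)-\mathcal{P}_{2,1}^{>1}(n,1)=P(n)-P(n-1)$, which agrees with Steps~1--3.
\end{itemize}
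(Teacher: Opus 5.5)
Your proposal is correct and follows the paper's proof: set $k=1$ in Theorem~\ref{new}, identify $C_1(n)=P(n)$, substitute $\mathcal{P}_{2,1}(n,1)=P_e(n)$ from Corollary~\ref{c1}, and rearrange. Your consistency check is also valid: partitions with $\text{mex}_{2,1}=1$ are exactly those with no part $1$, and there are $P(n)-P(n-1)$ of them, so it gives a direct combinatorial proof of the $k=1$ case of Theorem~\ref{new}.
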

This yields the subsequent result concerning the upper bound of $P(n)-P(n-1)$.
\begin{corollary}\label{new2}
	For $n\ge1$, there holds
	$$ P(n)-P(n-1)\le P_e(n).$$
\end{corollary}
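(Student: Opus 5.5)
The plan is to read the inequality off directly from Corollary~\ref{new1}, which is already stated above. That corollary expresses the quantity $P_e(n)-P(n)+P(n-1)$ as the counting function $\mathcal{P}_{2,1}^{>1}(n,1)$, so the whole argument comes down to observing that this right-hand side is non-negative.

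The steps are as follows. First, fix $n\ge1$ and invoke Corollary~\ref{new1}:
$$P_e(n)-P(n)+P(n-1)=\mathcal{P}_{2,1}^{>1}(n,1).$$
For the reader's benefit I would recall why this holds. Specialize Theorem~\ref{new} to $k=1$, where $C_1(n)=P(n)$. Then replace $\mathcal{P}_{2,1}(n,1)$ by $P_e(n)$ using Corollary~\ref{c1}, which is the case $k=1$ of Theorem~\ref{even}. Rearranging the resulting identity gives exactly the display above.

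Second, note that $\mathcal{P}_{2,1}^{>1}(n,1)$ is by definition the number of partitions of $n$, those counted by $\mathcal{P}_{2,1}(n,1)$ with $\text{mex}_{2,1}(\lambda_1)>1$. As a cardinality of a set it satisfies $\mathcal{P}_{2,1}^{>1}(n,1)\ge0$. Hence $P_e(n)-P(n)+P(n-1)\ge0$, which is $P(n)-P(n-1)\le P_e(n)$.

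I do not expect a genuine obstacle, since all the content sits in Theorem~\ref{new} and Corollary~\ref{c1}. The only point needing care is bookkeeping of signs when moving from the form of Theorem~\ref{new},
$$C_k(n)-C_k(n-1)=\mathcal{P}_{2,1}(n,k)-\mathcal{P}_{2,1}^{>1}(n,k),$$
to that of Corollary~\ref{new1}, making sure the term $\mathcal{P}_{2,1}^{>1}$ ends up alone on the side opposite $P_e(n)$. One could also note when equality holds, namely exactly when no partition of $n$ with mex congruent to $1$ modulo $4$ has $1$ as a part. Since $\text{mex}_{2,1}(\lambda_1)>1$ means $1$ is a part, this should rarely happen for larger $n$, but that refinement is not needed for the stated bound.
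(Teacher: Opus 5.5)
Your proposal is correct and is the paper's own argument: the paper says the bound "immediately follows from Corollary~\ref{new1}", that is, from $P_e(n)-P(n)+P(n-1)=\mathcal{P}_{2,1}^{>1}(n,1)\ge 0$. Your recap via Theorem~\ref{new} at $k=1$ and Corollary~\ref{c1} matches the paper's derivation. Your equality remark can be made exact: for $n\ge4$ the partition $3+1+\cdots+1$ has $\text{mex}_{2,1}=5$, so equality holds only for $n\le 3$.
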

%
Andrews and Bachraoui in \cite{twocolor} explicitly defined the following partition functions.
\begin{definition}
	For $n\ge0$, let $\mathcal{F}(n)$ denote the set of two color partitions of $n$ such that the even part may occur only in blue color and $F(n)=|\mathcal{F}(n)|.$ Now consider $\mathcal{H}(n)$ which denote the subset of $\mathcal{F}(n)$ wherein the parts of the same color do not repeat and $H(n)=|\mathcal{H}(n)|.$
\end{definition}
They also considered the following subsets of $\mathcal{F}(n)$ and $\mathcal{H}(n)$.
\begin{definition}
	 Let $F_0(n)$ (resp. $F_1(n)$) be the number of partitions in  $\mathcal{F}(n)$ in which number of odd parts in red color is even (resp. odd). Also, let $F_2(n)$ (resp. $F_3(n)$) be the number of partitions in  $\mathcal{F}(n)$ in which number of even parts is even (resp. odd).
	 
 Let $H_0(n)$ (resp. $H_1(n)$) be the number of partitions in  $\mathcal{H}(n)$ in which number of even parts is even (resp. odd). Also, let $H_2(n)$ (resp. $H_3(n)$) be the number of partitions in  $\mathcal{H}(n)$ in which the number of parts is even (resp. odd).	 
\end{definition}
	In this direction, we will extend these definitions for $k$-colors and also introduce some new kind of subsets.
	\begin{definition}
		For any non-negative integer $n$, let $\mathcal{N}_k(n)$ be the set of $k$-color partitions (say
		${color(1)},{color(2)},\dots,{color(k)}$) of $n$ such that the even parts may occur only in $k-1$ colors (say ${color(2)},{color(3)},\dots,{color(k)}$) and let $N_k(n)=|\mathcal{N}_k(n)|$, then we have
		$$\sum_{n=0}^{\infty}N_k(n)q^n=\frac{1}{(q;q^2)_{\infty}^k(q^2;q^2)_{\infty}^{k-1}}.$$
		Let $\mathcal{R}_k(n)$ be the subset of $\mathcal{N}_k(n)$, wherein the parts of the same color do not repeat and $R_k(n)=|\mathcal{R}_k(n)|$. Then, for $k\ge 2$
				\begin{align*}
			\sum_{n\ge0}^{}R_k(n)q^n&=(-q;q^2)_{\infty}^k(-q^2;q^2)_{\infty}^{k-1}\\
			&=(-q;q)_{\infty}^{k-1}(-q;q^2)_\infty.
		\end{align*}
	Now by using Euler's formula \cite[Corollary 1.2, p.~5]{andrewsbook}, i.e.
		$$(-q;q)_\infty=\frac{1}{(q;q^2)_\infty},$$
		we have
			\begin{align*}
			\sum_{n\ge0}^{}R_k(n)q^n&=\frac{(-q;q^2)_\infty}{(q;q^2)_{\infty}^{k-1}}\\
						&=\sum_{n\ge0}^{}\bar{O}_{k-1}(n)q^n,
		\end{align*}
			where $\bar{O}_{k}(n)$ denote the number of partitions of $n$ into odd parts only  of $k$ colors, where part of ${color(k)}$ may be overlined. 
	\end{definition}
\begin{definition}
	For a non-negative integer $n$, let $N_{0,k}(n)$ (resp. $N_{1,k}(n)$) denote the number of partitions of $n$ in $\mathcal{N}_k(n)$ such that parts in ${color(k)}$ are even (resp. odd) in counting.
	
	Now consider, $R_{0,k}(n)$ (resp. $R_{1,k}(n)$) denote the number of partitions of $n$ in $\mathcal{R}_k(n)$ such that parts in ${color(k)}$ are even (resp. odd) in counting. 
\end{definition}
	To bridge the concept of $N_{0,k}(n)$ and $N_{1,k}(n)$ with mex-theory, let us assume A is a positive integer and $a$ be a non-negative integer and $\lambda\in\mathcal{C}_k(n)$ such that the parts $\equiv 0\ (mod\ {A})$ may occur only in ${color(2)},{color(3)},\dots,{color(k)}$.
	Let $\text{mex}_{A,a}(\lambda,k)$ be the smallest integer $\equiv a\ (\text{mod}\ A)$ which is not a 
	part in $color(k)$ of $\lambda$. Accordingly, let ${P}_{A,a}(n,k)$ (resp. $\bar{P}_{A,a}(n,k)$ ) count the number of such
	partitions $\lambda$ of $n$ with
	$$\text{mex}_{A,a}(\lambda,k)\equiv a \ \text{mod}\ (2A)\ \ \ (\text{resp}.\ \text{mex}_{A,a}(\lambda,k)\equiv a+A \ \text{mod}\ (2A)) .$$
	Clearly, for $k\ge 3$, we have
	$$N_{0,k}(n)-N_{1,k}(n)=C_{k-2}(n),$$
	where $C_k(n)$, as defined earlier, denote the number of partitions of $n$ into $k$-colors.
	
	Interestingly, for $k=3$, there is a direct connection with partition function $P(n)$, i.e.
		$$N_{0,3}(n)-N_{1,3}(n)=P(n).$$
	
	To provide clarity of the definitions, we offer the following example.
	\begin{example}
		Take $n=3$, $
		k=3$ and $A=2$, $a=1$. Let us denote part $\lambda$ in $color(1)$ by $\lambda_b$, in $color(2)$ by $\lambda_r$ and in $color (3)$ by $\lambda_g$ with order $\lambda_b\ge \lambda_r\ge \lambda_g.$
		\begin{center}
			\renewcommand{\arraystretch}{1.2}
			\begin{table}[htbp]
				\centering
				\renewcommand{\arraystretch}{1.2}
				\begin{tabular}{c|cccc}
					\hline
					\textbf{$N_{3}(3)$} 
					& \textbf{\hspace{0.9cm}$N_{0,3}(3)$} \qquad\qquad
					& \textbf{\hspace{0.9cm}$N_{1,3}(3)$} \qquad\qquad
					& \textbf{\hspace{0.9cm}${P}_{2,1}(3,3)$} \qquad\qquad
					& \textbf{${\bar{P}}_{2,1}(3,3)$} \\
					\hline
					$3_b$           &  	$3_b$            &  	$3_g$       &	$3_b$        &	$1_g+2_g$           \\
					$3_r$          &     	$3_r$        &    $1_r+2_g$       &	$3_r$ &    	$1_g+2_r$       \\
					$3_g$          &     	$1_b+2_r$      &    	$1_b+2_g$        &	$3_g$ &    	$1_g+2_g$       \\
					$1_b+2_r$            &   $1_r+2_r$  	         &   	$1_g+2_r$          &		$1_b+2_r$     & 	  $1_g+1_g+1_g$           \\
					$1_b+2_g$            &    	$1_g+2_g$  	               & 	$1_r+1_r+1_g$     & $1_r+2_r$   & 	$1_b+1_r+1_g$          \\
					$1_r+2_g$       & 	$1_b+1_r+1_r$ 	     &    $1_b+1_r+1_g$       & 	$1_r+2_g$	      &   $1_b+1_g+1_g$            \\
					$1_r+2_r$       & 	$1_b+1_b+1_b$  		      &   			$1_b+1_b+1_g$        & 		$1_b +2_g$    &      	$1_r+1_r+1_g$         \\
					$1_g+2_r$       & $1_r+1_r+1_r$ 		      & $1_g+1_g+1_g$         &     $1_b+1_b+1_b$      &     	$1_r+1_g+1_g$          \\
					$1_g+2_g$       & 	$1_b+1_b+1_r$      &        & $1_b+1_r+1_r$    &            \\
					$1_b+1_b+1_b$         &  $1_b+1_g+1_g$             &           &  	$1_b+1_b+1_r$   	             &        \\
					$1_r+1_r+1_r$      &  	$1_r+1_g+1_g$    	       &   &   	$1_r+1_r+1_r$     &           \\
					$1_g+1_g+1_g$         &   		           &           &   	       &        \\
					$1_b+1_b+1_r$         &            &           &         &        \\
					$1_b+1_b+1_g$         &            &           &          &       \\
					$1_b+1_r+1_g$         &          &           &        &       \\
					$1_b+1_g+1_g$         &            &           &        &       \\
						$1_b+1_r+1_r$         &            &           &        &       \\
					$1_r+1_r+1_g$         &            &           &        &       \\
					$1_r+1_g+1_g$         &            &           &        &       \\
					\hline
				\end{tabular}
				\caption{\ }
				\label{t2}
			\end{table}
		\end{center}
			The Table \ref{t2} shows that $N_{0,3}(3)=11={P}_{2,1}(3,3)$ and $N_{1,3}(3)=8={\bar{P}}_{2,1}(3,3)$ and $N_{0,3}(3)-N_{1,3}(3)=3=P(3).$
\end{example}	

These identities are not merely a coincidence, but our next results shows that these holds for general $n$ and for any arbitrary number of colors $k$.
	\begin{theorem}\label{main}
			For any $n\ge 0$, we have
		$$N_{0,k}(n)={P}_{2,1}(n,k),$$
		and
			$$N_{1,k}(n)=\bar{{P}}_{2,1}(n,k). $$
	\end{theorem}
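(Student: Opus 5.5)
The plan is to separate the colors. Both sides of Theorem~\ref{main} are products of a factor that depends only on the parts in $color(k)$ and a factor that depends only on the parts in $color(1),\dots,color(k-1)$. The colored restriction, which allows even parts, or parts $\equiv 0 \pmod 2$, only in $color(2),\dots,color(k)$, never touches $color(k)$. So $color(k)$ is an ordinary unrestricted partition on both sides, and Corollary~\ref{c1} (the one-color case, i.e.\ Andrews--Newman) supplies the needed one-color identity.

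\textbf{Step 1: factor the generating functions.} Any $\lambda$ counted by $N_{0,k}(n)$ or by $P_{2,1}(n,k)$ splits uniquely as $\lambda=(\mu,\nu)$. Here $\nu$ is the subpartition in $color(k)$, which is unrestricted. The remaining colors form $\mu$: $color(1)$ has odd parts only, and $color(2),\dots,color(k-1)$ are unrestricted. Let
$$G(q)=\frac{1}{(q;q^2)_\infty(q;q)_\infty^{k-2}}=\frac{1}{(q;q^2)_{\infty}^{k-1}(q^2;q^2)_{\infty}^{k-2}}$$
be the generating function of the possible $\mu$. The condition defining $N_{0,k}$ (even number of parts in $color(k)$) depends only on $\nu$. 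The condition defining $P_{2,1}(n,k)$ ($\mathrm{mex}_{2,1}(\lambda,k)\equiv 1 \pmod 4$) also depends only on $\nu$, since that mex is computed from the $color(k)$ parts alone. Hence
$$\sum_{n\ge0}N_{0,k}(n)q^n=G(q)\sum_{n\ge0}P_e(n)q^n,\qquad \sum_{n\ge0}P_{2,1}(n,k)q^n=G(q)\sum_{n\ge0}\mathcal{P}_{2,1}(n,1)q^n .$$
The analogous identities hold for $N_{1,k}$ and $\bar P_{2,1}(n,k)$, with $P_o$ and $\bar{\mathcal{P}}_{2,1}(n,1)$ in place of $P_e$ and $\mathcal{P}_{2,1}(n,1)$.

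\textbf{Step 2: apply the one-color identity.} Corollary~\ref{c1} gives $\mathcal{P}_{2,1}(n,1)=P_e(n)$ and $\bar{\mathcal{P}}_{2,1}(n,1)=P_o(n)$ for all $n$. Substituting into Step 1 makes the two generating functions coincide, and comparing coefficients of $q^n$ yields both statements of Theorem~\ref{main}.

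Combinatorially, the same argument is a bijection. Fix $\mu$, and biject the $color(k)$ part $\nu$ using the bijection (or the counting identity) behind Corollary~\ref{c1} applied to $n-|\mu|$.

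\textbf{Main obstacle.} The only point needing care is Step 1. One must check that the definition of $\mathrm{mex}_{A,a}(\lambda,k)$ really restricts only $color(1)$, and leaves $color(k)$ free to contain both even and odd parts, exactly as in $\mathcal{N}_k(n)$. It must also genuinely ignore the parts in the other colors. Table~\ref{t2} confirms this reading: for instance $1_g+2_g$ is admitted on both sides. Once this is verified the statement is a formal consequence of Corollary~\ref{c1}.

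As a consistency check, $\sum_{n\ge0}(-1)^{\#\nu}q^{|\nu|}=1/(-q;q)_\infty=(q;q^2)_\infty$. Multiplying by $G(q)$ gives $1/(q;q)_\infty^{k-2}$, which recovers the stated relation $N_{0,k}(n)-N_{1,k}(n)=C_{k-2}(n)$.
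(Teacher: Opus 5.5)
Your argument is correct and is essentially the paper's proof. The paper adds and subtracts the generating functions and applies Jacobi's triple product. It ends with exactly your factor $G(q)=1/\bigl((q;q^2)_\infty^{k-1}(q^2;q^2)_\infty^{k-2}\bigr)$ times the one-color mex generating function $\sum_{n\ge0} q^{1+3+\cdots+(4n-1)}/\prod_{j\neq 4n+1}(1-q^j)$. The only difference is that the paper re-derives the one-color identity inline, where you cite Corollary~\ref{c1}.

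One caveat applies to both your argument and the paper's: each silently needs $k\ge 2$. For $k=1$ the odd-parts-only restriction falls on $color(k)$ itself, and the statement fails. For example, $N_{0,1}(2)=1$ from $1+1$, while $P_{2,1}(2,1)=0$ because $\mathrm{mex}_{2,1}=3$.
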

%
	Our next goal is to prove the following theorem which connect $N_{0,k}(n)$ and $N_{1,k}(n)$ with overpartitions. An overpartition of $n$ is a non-increasing sequence of natural numbers whose sum
	is $n$ in which the first occurrence (equivalently, the final occurrence) of a number
	may be overlined (see \cite{overp}).
	
	We will assume that $\bar{C}_k(n)=0$ for $k\le 0.$
		\begin{theorem}\label{over} For $k\in \mathbb{N}$, we have
		$$N_{0,k}(n)=\frac{\bar{{C}}_{k-1}+{C}_{k-2}(n)}{2},\ \ \ N_{1,k}(n)=\frac{\bar{{C}}_{k-1}-{C}_{k-2}(n)}{2},$$
		where $\bar{{C}}_{k}(n)$ denote the number of partitions of $n$ into $k$ colors where parts in ${color(k)}$ may be overlined.
	\end{theorem}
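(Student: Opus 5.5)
Our plan is to compute the generating functions of $N_{0,k}(n)$ and $N_{1,k}(n)$ directly and then identify $N_{0,k}+N_{1,k}$ and $N_{0,k}-N_{1,k}$ separately; the stated formulas then follow by adding and subtracting. Let $z$ mark the number of parts in $color(k)$. In $\mathcal{N}_k(n)$ the colors $color(2),\dots,color(k)$ admit all parts and $color(1)$ admits only odd parts. The bivariate generating function is therefore
$$\sum_{n\ge 0}\sum_{\lambda\in\mathcal{N}_k(n)} z^{\#\,color(k)\text{ parts of }\lambda}\,q^n=\frac{1}{(q;q^2)_\infty\,(q;q)_\infty^{k-2}\,(zq;q)_\infty}.$$
Extracting the even and odd parts in $z$ by the usual device gives
$$\sum_{n\ge0}N_{0,k}(n)q^n=\frac12\left(F(1)+F(-1)\right),\qquad \sum_{n\ge0}N_{1,k}(n)q^n=\frac12\left(F(1)-F(-1)\right),$$
where $F(z)$ is the product above. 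This reduces the theorem to identifying $F(1)$ with $\sum\bar C_{k-1}(n)q^n$ and $F(-1)$ with $\sum C_{k-2}(n)q^n$.

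For the difference, we use $(-q;q)_\infty(q;q^2)_\infty=1$, which is Euler's formula quoted in the paper. It gives
$$F(-1)=\frac{1}{(q;q^2)_\infty(-q;q)_\infty\,(q;q)_\infty^{k-2}}=\frac{1}{(q;q)_\infty^{k-2}}=\sum_{n\ge0}C_{k-2}(n)q^n.$$
This recovers the remark already made in the paper that $N_{0,k}(n)-N_{1,k}(n)=C_{k-2}(n)$.

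For the sum, we apply Euler's formula in the reverse direction, $1/(q;q^2)_\infty=(-q;q)_\infty$. This gives
$$F(1)=\frac{(-q;q)_\infty}{(q;q)_\infty^{k-1}}.$$
The right-hand side is exactly the generating function of partitions into $k-1$ colors in which the distinct parts of the last color may additionally be overlined. That count is $\bar C_{k-1}(n)$: the overlining factor $(-q;q)_\infty$ combines with one factor $1/(q;q)_\infty$ to form the overpartition generating function $(-q;q)_\infty/(q;q)_\infty$.

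Adding and subtracting the two identities and comparing coefficients of $q^n$ yields both formulas. The step most likely to cause trouble is the small-$k$ bookkeeping. For $k=1$ the factor $(q;q)_\infty^{k-2}$ has a negative exponent, while the convention $\bar C_0=0$ and $C_{-1}$ must be interpreted consistently, so $k=1,2$ need to be checked separately against the convention stated before the theorem, and possibly excluded or reinterpreted. For $k\ge 2$ the computation goes through uniformly, with $C_0(n)=\delta_{n,0}$.
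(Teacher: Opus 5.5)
Your proof is correct for $k\ge 2$ and is essentially the paper's argument: both compute the generating functions of $N_{0,k}\pm N_{1,k}$ (your $F(\pm 1)$), use Euler's identity $(q;q^2)_\infty(-q;q)_\infty=1$ to reduce them to $1/(q;q)_\infty^{k-2}$ and $(-q;q)_\infty/(q;q)_\infty^{k-1}$, and then add and subtract. Your caution about $k=1$ is warranted, and the right resolution is to exclude it rather than reinterpret it: there $color(1)=color(k)$ admits only odd parts, so $F(z)$ is not the correct product, and the statement actually fails (e.g.\ $N_{0,1}(0)+N_{1,1}(0)=1$ while the paper's convention gives $\bar{C}_0(0)=0$); the paper's proof also tacitly assumes $k\ge 2$.
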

	Now setting $k=3$ in the expression for $\mathcal{R}_k(n)$ reveals that its coefficients satisfy the following congruences modulo 2.
	\begin{theorem}\label{O}
We have,
			$$
		\mathcal{R}_3(n)=	\bar{O}_{2}(n) \equiv P(d,n)\equiv
			\begin{cases}
				0\pmod 2 & when\  n\neq\frac{m(3m-1)}{2}\\
				1\pmod 2 & when\  n=\frac{m(3m-1)}{2},
			\end{cases}
$$
where $P(d,n)$ denote the total number of partition of $n$ into distinct parts only.
	\end{theorem}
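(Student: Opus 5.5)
The plan is to work entirely with generating functions and reduce modulo $2$. From the definition of $\mathcal{R}_k(n)$, setting $k=3$ gives
\begin{align*}
\sum_{n\ge0}\mathcal{R}_3(n)q^n=(-q;q)_\infty^{2}(-q;q^2)_\infty=\frac{(-q;q^2)_\infty}{(q;q^2)_\infty^{2}}=\sum_{n\ge0}\bar{O}_2(n)q^n .
\end{align*}
This is already established in the paper via Euler's identity, so the first equality $\mathcal{R}_3(n)=\bar{O}_2(n)$ needs nothing further.

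For the congruences I will use the standard fact that, as power series with integer coefficients, $(1+x)\equiv(1-x)\pmod 2$. Applying this factor by factor to each infinite product (legitimate coefficientwise, since every coefficient depends on only finitely many factors) gives $(-q;q^2)_\infty\equiv(q;q^2)_\infty\pmod 2$. Hence
\begin{align*}
\frac{(-q;q^2)_\infty}{(q;q^2)_\infty^{2}}\equiv\frac{1}{(q;q^2)_\infty}=(-q;q)_\infty=\sum_{n\ge0}P(d,n)q^n\pmod 2,
\end{align*}
where the middle equality is again Euler's identity. This yields $\bar{O}_2(n)\equiv P(d,n)\pmod 2$.

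It remains to find the parity of $P(d,n)$. Applying the same reduction once more gives $(-q;q)_\infty\equiv(q;q)_\infty\pmod 2$, and Euler's pentagonal number theorem gives
\begin{align*}
(q;q)_\infty=\sum_{m=-\infty}^{\infty}(-1)^m q^{m(3m-1)/2}.
\end{align*}
The generalized pentagonal numbers $m(3m-1)/2$, $m\in\mathbb{Z}$, are pairwise distinct. Indeed, $m\mapsto m(3m-1)/2$ is injective on $\mathbb{Z}$, because equality for $m\neq m'$ would force $3(m+m')=1$, which has no integer solution. So each exponent appears exactly once with coefficient $\pm1\equiv1$. Every other coefficient is $0$, which gives exactly the stated case distinction.

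I expect no step to be a real obstacle. The only points needing care are the injectivity check above, so that no two pentagonal terms cancel modulo $2$, and making clear that $m$ ranges over all integers, since the statement writes $n=m(3m-1)/2$ without specifying the range of $m$.
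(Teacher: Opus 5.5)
Your proposal is correct and follows the paper's proof essentially step for step: reduce $(-q;q^2)_\infty\equiv(q;q^2)_\infty \pmod 2$, identify $1/(q;q^2)_\infty=(-q;q)_\infty$ as the generating function of $P(d,n)$, reduce again to $(q;q)_\infty$, and apply Euler's pentagonal number theorem. The paper leaves two points implicit that you make explicit: the injectivity of $m\mapsto m(3m-1)/2$, and the fact that $m$ ranges over all of $\mathbb{Z}$.
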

	Next we have the following theorem.
	\begin{theorem}\label{R1}
		 For any nonnegative integer $n$, we have 
		\begin{enumerate}
			\item $R_{0,3}(n) = \begin{cases} \frac{\bar{O}_{2}(n) }{2} + \frac{(-1)^{\frac{m(3m\mp1)}{2}}}{2} & \text{if } n=\frac{m(3m\pm1)}{2} \\
				0& \text{otherwise,} \end{cases}$
			\item $R_{1,3}(n) = \begin{cases} \frac{\bar{O}_{2}(n) }{2} - \frac{(-1)^{\frac{m(3m\mp1)}{2}}}{2} & \text{if } n=\frac{m(3m\pm1)}{2} \\
				0& \text{otherwise.} \end{cases}$
		\end{enumerate}
	\end{theorem}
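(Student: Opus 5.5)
The plan is to compute the generating function of $R_{0,3}(n)-R_{1,3}(n)$ by attaching a sign $z=-1$ to the parts in $color(3)$. Then I identify the resulting product with a classical theta-type series via Euler's pentagonal number theorem. Finally I combine this with $R_{0,3}(n)+R_{1,3}(n)=R_3(n)=\bar{O}_2(n)$, which is the definition of $\mathcal{R}_k(n)$ (and Theorem \ref{O}) at $k=3$.

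\emph{Step 1 (generating functions).} In $\mathcal{R}_3(n)$, $color(1)$ carries distinct odd parts and $color(2)$, $color(3)$ carry distinct parts of any size. Marking each $color(3)$ part by $z$ gives
$$\sum_{n\ge0}\sum_{\lambda\in\mathcal{R}_3(n)} z^{\#color(3)\text{ parts}}q^n=(-q;q^2)_\infty(-q;q)_\infty(-zq;q)_\infty .$$
At $z=1$ this is $\sum_n \bar{O}_2(n)q^n$, by the computation following the definition of $\mathcal{R}_k(n)$. At $z=-1$ it gives
$$\sum_{n\ge0}\bigl(R_{0,3}(n)-R_{1,3}(n)\bigr)q^n=(-q;q^2)_\infty(-q;q)_\infty(q;q)_\infty=(-q;q^2)_\infty(q^2;q^2)_\infty .$$

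\emph{Step 2 (recognising the product).} The key observation is that $(-q;q^2)_\infty(q^2;q^2)_\infty=\prod_{j\ge1}(1-(-q)^j)=(-q;-q)_\infty$, since odd $j$ give factors $1+q^j$ and even $j$ give $1-q^j$. Euler's pentagonal number theorem with $q\mapsto -q$ then yields
$$(-q;-q)_\infty=\sum_{m\in\mathbb{Z}}(-1)^m(-1)^{m(3m-1)/2}q^{m(3m-1)/2}.$$
Write $n=m(3m\pm1)/2$. Since $m(3m+1)/2-m(3m-1)/2=m$, the sign $(-1)^{m+n}$ equals $(-1)^{m(3m\mp1)/2}$, which is exactly the sign in the statement. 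So $R_{0,3}(n)-R_{1,3}(n)=(-1)^{m(3m\mp1)/2}$ at generalized pentagonal numbers, and the difference is $0$ otherwise.

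\emph{Step 3 (solving).} Adding and subtracting this difference relation and $R_{0,3}(n)+R_{1,3}(n)=\bar{O}_2(n)$ gives the two displayed formulas at pentagonal $n$. For non-pentagonal $n$ it gives $R_{0,3}(n)=R_{1,3}(n)=\bar{O}_2(n)/2$. This is consistent with Theorem \ref{O}, which makes $\bar{O}_2(n)$ even there, though that parity is automatic from the formulas. The ``otherwise'' branch of the statement should therefore read $\bar{O}_2(n)/2$ rather than $0$; I would prove that form and note the correction.

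\emph{Main obstacle.} The only delicate point is the sign bookkeeping in Step 2: converting $(-1)^m(-1)^{n}$ into the stated exponent $m(3m\mp1)/2$. I also need to check that both signs of $m$ are covered, which holds because $m\mapsto -m$ interchanges the two pentagonal families. Everything else is a direct product manipulation.
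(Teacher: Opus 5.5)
Your proposal is correct and follows the paper's proof. The paper likewise forms $\sum(R_{0,3}(n)-R_{1,3}(n))q^n=(q^2;q^2)_\infty(-q;q^2)_\infty$ and identifies it as Ramanujan's $f(q,-q^2)$, which is exactly your $(-q;-q)_\infty$ (Euler's pentagonal theorem with $q\mapsto -q$); it then adds and subtracts $R_{0,3}(n)+R_{1,3}(n)=\bar{O}_2(n)$.

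Your correction of the ``otherwise'' branch is also right. Adding and subtracting those two relations gives $R_{0,3}(n)=R_{1,3}(n)=\bar{O}_2(n)/2$ at non-pentagonal $n$; for instance $R_{0,3}(3)=R_{1,3}(3)=5$. So the value $0$ in the statement, and in the paper's final step, is a slip.
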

	This theorem gives us following corollary.
		\begin{corollary}\label{c3}
		$$R_{0,3}(n)-R_{1,3}(n) = \begin{cases}  (-1)^{\frac{m(3m\mp1)}{2}} & \text{if } n=\frac{m(3m\pm1)}{2} \\
			0& \text{otherwise.} \end{cases}$$
	\end{corollary}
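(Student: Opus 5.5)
The plan is to obtain Corollary \ref{c3} directly from Theorem \ref{R1} by subtraction, so essentially no new combinatorics is needed. Fix $n\ge 0$ and split into the two cases that appear in Theorem \ref{R1}.

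\emph{Case 1: $n=\frac{m(3m\pm1)}{2}$ for some integer $m$.} Parts (1) and (2) of Theorem \ref{R1} give
\begin{align*}
R_{0,3}(n)-R_{1,3}(n)&=\left(\frac{\bar{O}_{2}(n)}{2}+\frac{(-1)^{\frac{m(3m\mp1)}{2}}}{2}\right)-\left(\frac{\bar{O}_{2}(n)}{2}-\frac{(-1)^{\frac{m(3m\mp1)}{2}}}{2}\right)\\
&=(-1)^{\frac{m(3m\mp1)}{2}}.
\end{align*}
The terms involving $\bar{O}_{2}(n)$ cancel, and the two halves of the sign term add.

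\emph{Case 2: $n$ is not of the form $\frac{m(3m\pm1)}{2}$.} Theorem \ref{R1} assigns the value $0$ to both functions, so their difference is $0$.

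The only point needing care is well-definedness: one integer $n$ could be of the form $\frac{m(3m+1)}{2}$ for one $m$ and $\frac{m'(3m'-1)}{2}$ for another $m'$. In that situation the exponent $\frac{m(3m\mp1)}{2}$ must give the same sign in both representations. The corollary inherits whatever convention Theorem \ref{R1} uses, since the subtraction never combines the two representations.

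As an independent sanity check, I would also compare with the signed generating function. Weighting each partition in $\mathcal{R}_3(n)$ by $(-1)^{\#\text{parts in }color(3)}$ turns the $color(3)$ factor $(-q;q)_\infty$ into $(q;q)_\infty$. One then checks, through Euler's identity and Jacobi's triple product, that the resulting product matches $\sum_m (-1)^{\cdot} q^{m(3m\pm1)/2}$ as asserted. I expect this check to be the only potentially delicate step. It is not needed for the deduction from Theorem \ref{R1}, which is immediate.
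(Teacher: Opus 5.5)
Your Case~1 is fine, but Case~2 rests on a clause of Theorem~\ref{R1} that is false as stated. We have $R_{0,3}(n)+R_{1,3}(n)=\bar{O}_2(n)$, and the difference vanishes off the generalized pentagonal numbers. So the correct ``otherwise'' value is $R_{0,3}(n)=R_{1,3}(n)=\bar{O}_2(n)/2$, not $0$.

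For example, write $b,r,g$ for $color(1),color(2),color(3)$ as in Table~\ref{t2}. Then $\mathcal{R}_3(3)$ consists of ten partitions:
\begin{itemize}
\item $3_b$, $3_r$, $3_g$;
\item the six partitions $2_x+1_y$ with $x\in\{r,g\}$ and $y\in\{b,r,g\}$;
\item $1_b+1_r+1_g$.
\end{itemize}
Five of these have an even and five an odd number of $color(3)$ parts, so $R_{0,3}(3)=R_{1,3}(3)=5\neq 0$. Your conclusion that the difference is $0$ is true, but your reason for it (``both are $0$'') is not, so Case~2 is unproved. Moreover, subtracting the two parts of Theorem~\ref{R1} can only return the generating-function identity that theorem was built from. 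The sound route is to go to that identity directly.

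That is what the paper does. Its proof of Corollary~\ref{c3} compares coefficients in the identity obtained in the proof of Theorem~\ref{R1}:
\begin{equation*}
\sum_{n\ge0}\bigl(R_{0,3}(n)-R_{1,3}(n)\bigr)q^n=(-q;q^2)_\infty(-q;q)_\infty(q;q)_\infty=(-q;q^2)_\infty(q^2;q^2)_\infty,
\end{equation*}
with the right-hand side expanded as the theta function $f(q,-q^2)$.

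Your ``sanity check'' is exactly this argument, so make it the proof. It is not delicate, because
\begin{equation*}
(-q;q^2)_\infty(q^2;q^2)_\infty=\prod_{j\ge1}\bigl(1-(-q)^j\bigr)=(-q;-q)_\infty .
\end{equation*}
Euler's pentagonal number theorem with $q\mapsto -q$ then gives
\begin{equation*}
\sum_{n\ge0}\bigl(R_{0,3}(n)-R_{1,3}(n)\bigr)q^n=\sum_{m\in\mathbb{Z}}(-1)^{m(3m+1)/2}q^{m(3m-1)/2}.
\end{equation*}
Taking $m>0$ and $m<0$ separately (with $m=0$ giving $n=0$) yields precisely the two signed cases of the corollary.

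This also settles the well-definedness question you left open. If $m(3m-1)/2=m'(3m'-1)/2$, then $(m-m')(3(m+m')-1)=0$, which forces $m=m'$. So every generalized pentagonal number has a unique representation, and the sign is unambiguous.
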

	\begin{remark}
	We note that the case $k=4$ is quite interesting in the following way:
	$$R_{0,4}(n)-R_{1,4}(n)\equiv(q;q)_{\infty}^2 (\text{mod}\ 2).$$
Hence, under $(\text{mod}\ 2)$ this difference is directly connected with square of Euler's series, which is more deeply studied by Glaisher in \cite{glash}.
\end{remark}
\subsection{Generalizations Of Partition Functions Of Andrews-Bachraoui}\label{g}
\begin{definition}
	Let $N_{2,k}(n)$ (resp. $N_{3,k}(n)$) denote number of partitions in $\mathcal{N}_k(n)$ such that odd parts in color $k$ are even (resp. odd) in counting. For example, $N_{2,2}(3)=4$ with admissible partitions, $3_b,\ 1_b+2_r,\ 1_b+1_r+1_r$, $1_b+1_b+1_b$ and $N_{3,k}(3)=4$ with admissible partitions, $3_r,\ 1_r+2_r,\ 1_r+1_r+1_r,\ 1_b+1_b+1_r$.

Also let $N_{4,k}(n)$ (resp. $N_{5,k}(n)$) denote number of partitions in $\mathcal{N}_k(n)$ such that number of even parts in $color(k)$ are even (resp. odd).
\end{definition}
	\begin{theorem}\label{gen}
			$$	\sum_{n=0}^{\infty}N_{2,k}(n)q^n=\frac{(q^{16},-q^6,-q^{10};q^{16})_\infty}{(q;q^2)_{\infty}^{k-1}(q^2;q^2)_{\infty}^k}.$$
	\end{theorem}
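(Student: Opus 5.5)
The plan is to compute the generating function of $N_{2,k}(n)$ color by color, isolate the contribution of the odd parts in $color(k)$, and then identify that contribution with a known Rogers--Slater type product.

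\textbf{Step 1: factor by colors.} A partition in $\mathcal{N}_k(n)$ splits uniquely into its $color(1)$ part (odd parts only), its $color(2),\dots,color(k-1)$ parts (unrestricted), and its $color(k)$ part. The parity condition defining $N_{2,k}(n)$ involves only the odd parts in $color(k)$. So I separate the $color(k)$ parts further into even and odd parts. This gives
$$\sum_{n\ge0}N_{2,k}(n)q^n=\frac{1}{(q;q^2)_\infty}\cdot\frac{1}{(q;q)_\infty^{k-2}}\cdot\frac{1}{(q^2;q^2)_\infty}\cdot E(q),$$
where $E(q)$ is the generating function for partitions into odd parts with an even number of parts. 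Writing $(q;q)_\infty=(q;q^2)_\infty(q^2;q^2)_\infty$, the prefactor becomes $1/\big((q;q^2)_\infty^{k-1}(q^2;q^2)_\infty^{k-1}\big)$. The theorem is therefore equivalent to the single, $k$-independent identity
$$E(q)=\frac{(q^{16},-q^6,-q^{10};q^{16})_\infty}{(q^2;q^2)_\infty}.$$

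\textbf{Step 2: a series for $E(q)$.} Introduce a variable $z$ marking the number of parts. By Euler,
$$\frac{1}{(zq;q^2)_\infty}=\sum_{m\ge0}\frac{z^mq^m}{(q^2;q^2)_m}.$$
Keeping only even $m$ gives
$$E(q)=\sum_{j\ge0}\frac{q^{2j}}{(q^2;q^2)_{2j}}.$$
Setting $Q=q^2$, this reads $E=\sum_{j\ge0}Q^j/(Q;Q)_{2j}$.

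\textbf{Step 3: the main obstacle.} It remains to prove
$$\sum_{j\ge0}\frac{Q^j}{(Q;Q)_{2j}}=\frac{(Q^8,-Q^3,-Q^5;Q^8)_\infty}{(Q;Q)_\infty}.$$
This is a known identity in Slater's list, and my first move would be to cite it. As a self-contained alternative, I would work from the even part of the two-term expression
$$E(q)=\frac12\left(\frac{1}{(q;q^2)_\infty}+\frac{1}{(-q;q^2)_\infty}\right)=\frac{(-q;q^2)_\infty+(q;q^2)_\infty}{2\,(q^2;q^4)_\infty}.$$
By Jacobi's triple product, $(\pm q;q^2)_\infty(q^2;q^2)_\infty=\sum_n(\pm1)^nq^{n^2}$. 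Hence $E(q)(q^2;q^2)_\infty$ reduces to $\sum_n q^{4n^2}/(q^2;q^4)_\infty$.

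The numerator on the right of the target is also a theta function. The triple product with base $Q^8$ and $z=Q$ gives
$$(Q^8,-Q^3,-Q^5;Q^8)_\infty=\sum_n Q^{4n^2+n}.$$
So the remaining task is to show
$$\frac{\sum_n q^{8n^2}}{(q^2;q^4)_\infty}=\sum_n q^{8n^2+2n}.$$
I expect this to be the delicate point, and would attempt it via the quintuple product identity or a standard dissection of theta functions. Failing that, I would fall back on citing Slater's identity.

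As a sanity check, the small coefficients of $E$ are $1,1,2$ at $q^0,q^2,q^4$ (from $\emptyset$, $1+1$, and $3+1,\ 1+1+1+1$). These agree with the expansion of the product $(q^{16},-q^6,-q^{10};q^{16})_\infty/(q^2;q^2)_\infty$, which also gives $1,1,2$ in those degrees.
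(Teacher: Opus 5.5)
Your Steps 1--2 are correct, and so is the expression $E(q)=\frac{(-q;q^2)_\infty+(q;q^2)_\infty}{2(q^2;q^4)_\infty}$. This is exactly the paper's starting point: its generating functions for $N_{2,k}\pm N_{3,k}$, averaged. The identity $E(q)=(q^{16},-q^6,-q^{10};q^{16})_\infty/(q^2;q^2)_\infty$ that you isolate is also true.

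The gap is Step 3, which carries all the content. The triple product does not give $(\pm q;q^2)_\infty(q^2;q^2)_\infty=\sum_n(\pm1)^nq^{n^2}$; the factor must be squared, e.g.\ $(-q;q^2)_\infty^2(q^2;q^2)_\infty=\sum_nq^{n^2}$. As a result, the identity you reduce to is false:
\begin{itemize}
\item $E(q)(q^2;q^2)_\infty=1+q^6+q^{10}+\cdots$ has no $q^2$ term;
\item $\sum_nq^{4n^2}/(q^2;q^4)_\infty$ begins $1+q^2+\cdots$, and so does the $q^{8n^2}$ version you then write (which also does not match the previous line).
\end{itemize}
So the ``delicate point'' comes from the misapplied identity, and no quintuple-product or dissection argument can prove it. The fallback citation does not close the gap either. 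The familiar Slater-list identity with this theta product, the Jackson--Slater identity, is $\sum_jQ^{2j^2}/(Q;Q)_{2j}=(Q^8,-Q^3,-Q^5;Q^8)_\infty/(Q^2;Q^2)_\infty$, which is a different identity. You give no precise reference for yours, so it cannot carry the proof.

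The missing step is a single use of the triple product with base $q^4$ rather than $q^2$:
\begin{itemize}
\item $(q^4;q^4)_\infty(-q;q^2)_\infty=(q^4,-q,-q^3;q^4)_\infty=\sum_nq^{2n^2+n}$;
\item replacing $q$ by $-q$ gives $(q^4;q^4)_\infty(q;q^2)_\infty=\sum_n(-1)^nq^{2n^2+n}$.
\end{itemize}
Adding the two kills odd $n$, so $(-q;q^2)_\infty+(q;q^2)_\infty=2\sum_mq^{8m^2+2m}/(q^4;q^4)_\infty$. Since $(q^2;q^4)_\infty(q^4;q^4)_\infty=(q^2;q^2)_\infty$, this gives $E(q)(q^2;q^2)_\infty=\sum_mq^{8m^2+2m}$. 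The triple product with base $q^{16}$ then identifies this with $(q^{16},-q^6,-q^{10};q^{16})_\infty$. This is precisely the paper's argument, and with it your Step 2 series $\sum_jQ^j/(Q;Q)_{2j}$ is not needed.
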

\begin{corollary}\label{gc1}
	For any nonnegative integer $n$, we have that $N_{2,k}(n)$ equals the number
	of partitions of $n$ in $k$ colors wherein the odd parts and the parts
	$\equiv 0\  (\text{mod}\  16) $  may appear only in $k-1$ colors and the parts $\equiv 6,10\  (\text{mod}\  16) $ in $color(k)$ may be overlined.
\end{corollary}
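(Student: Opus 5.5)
The plan is to read the corollary directly off the product in Theorem~\ref{gen}, by rewriting the right-hand side as a product of elementary factors. Each factor will correspond to one residue class of parts in one color. No new bijection is needed; the whole argument is bookkeeping with the $q$-products.

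First I will split the denominator according to colors. The factor $(q;q^2)_{\infty}^{-(k-1)}$ generates odd parts in $k-1$ colors, say $color(1),\dots,color(k-1)$. The factor $(q^2;q^2)_{\infty}^{-k}$ generates even parts in all $k$ colors. I then single out the copy of $(q^2;q^2)_\infty^{-1}$ belonging to $color(k)$ and factor it by residue classes modulo $16$:
$$\frac{1}{(q^2;q^2)_\infty}=\prod_{r\in\{2,4,\dots,16\}}\frac{1}{(q^{r};q^{16})_\infty},$$
where the class $r=16$ stands for $(q^{16};q^{16})_\infty$.

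Next I combine this with the numerator $(q^{16},-q^6,-q^{10};q^{16})_\infty$, in three pieces.
\begin{enumerate}
\item The factor $(q^{16};q^{16})_\infty$ cancels the class $r=16$. Hence parts $\equiv 0 \pmod{16}$ cannot occur in $color(k)$; they occur only in the remaining $k-1$ colors, exactly as the odd parts do.
\item For $r=6$, the quotient $(-q^{6};q^{16})_\infty/(q^{6};q^{16})_\infty$ equals $\prod_{m\ge0}(1+q^{16m+6})/(1-q^{16m+6})$. This is the overpartition generating function for parts $\equiv 6\pmod{16}$ in $color(k)$: for each such size, the first occurrence may or may not be overlined.
\item The class $r=10$ is handled identically.
\end{enumerate}
All other even residue classes in $color(k)$ remain unrestricted and non-overlined.

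Multiplying everything back together gives exactly the generating function for the set of $k$-color partitions described in the corollary. Comparing coefficients of $q^n$ then yields the claimed equality with $N_{2,k}(n)$.

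The only delicate point is to state the color convention consistently. The color that loses the odd parts must be the same color, $color(k)$, that loses the multiples of $16$ and carries the overlinings. This works because the $k-1$ copies of $(q;q^2)_\infty^{-1}$ can be assigned to $color(1),\dots,color(k-1)$. So there is no real obstacle beyond this bookkeeping.
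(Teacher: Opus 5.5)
Your argument is correct and follows the paper. The paper gives no separate proof of this corollary and treats it as an immediate combinatorial reading of the product in Theorem~\ref{gen}; you make the bookkeeping explicit, namely cancelling the class $0 \pmod{16}$ in one copy of $(q^2;q^2)_\infty^{-1}$ and reading $(-q^6,-q^{10};q^{16})_\infty/(q^6,q^{10};q^{16})_\infty$ as overpartitions. One small overstatement: the colour assignment is not forced, because the generating function is a product and any choice of which colours lose the odd parts and the multiples of $16$ gives the same count, so your choice of $color(k)$ for both is valid but not the only consistent one.
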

		Theorem \ref{gen} provides corollaries established by Andrews and Bachraoui \cite[Theorem 1]{twocolor}:
	\begin{corollary}\label{p}
	We have
		$$N_{2,2}(n)=F_0(n).$$
	\end{corollary}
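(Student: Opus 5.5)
The plan is to compare generating functions directly, using the standard parity-filter device. I will not pass through the product formula of Theorem \ref{gen}, although it provides a consistency check at the end.

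\textbf{Generating function for $N_{2,2}(n)$.} Recall that $\mathcal{N}_2(n)$ consists of two-color partitions in which $color(1)$ carries only odd parts and $color(2)$ carries both odd and even parts. Mark each odd part of $color(2)$ with a variable $z$. The trivariate generating function is then
$$\frac{1}{(q;q^2)_\infty\,(zq;q^2)_\infty\,(q^2;q^2)_\infty}.$$
Since $N_{2,2}(n)$ counts those partitions in which the number of odd parts of $color(2)$ is even, averaging the cases $z=1$ and $z=-1$ gives
$$\sum_{n\ge0}N_{2,2}(n)q^n=\frac12\left(\frac{1}{(q;q^2)_\infty^2(q^2;q^2)_\infty}+\frac{1}{(q;q^2)_\infty(-q;q^2)_\infty(q^2;q^2)_\infty}\right).$$

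\textbf{Generating function for $F_0(n)$.} In $\mathcal{F}(n)$, red carries only odd parts and blue carries all parts. Mark each red (odd) part with $z$ to get
$$\frac{1}{(zq;q^2)_\infty\,(q;q^2)_\infty\,(q^2;q^2)_\infty}.$$
Taking the average of $z=\pm1$ yields exactly the same expression as above.

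\textbf{Conclusion and the main subtlety.} The two $q$-series coincide coefficientwise, so $N_{2,2}(n)=F_0(n)$. Combinatorially the match is clear once we relabel: $color(1)\leftrightarrow$ red in the sense of ``odd parts only'', and $color(2)\leftrightarrow$ blue. The two sign conditions then sit on different colors, but in each case they sit on the odd parts of one copy of the odd-part generating function $1/(q;q^2)_\infty$. Since the product is symmetric in the two odd factors, the identity follows.

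I do not expect a real obstacle. The only point needing care is getting the color bookkeeping right, so that the signed factor is $1/(-q;q^2)_\infty$ in both cases.

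As a check, one may simplify using $(q;q^2)_\infty(-q;q^2)_\infty=(q^2;q^4)_\infty$. One then compares with Theorem \ref{gen} at $k=2$, which should reproduce the Andrews--Bachraoui product $\frac{(q^{16},-q^6,-q^{10};q^{16})_\infty}{(q;q^2)_\infty(q^2;q^2)_\infty^2}$. This confirms that Corollary \ref{p} is precisely \cite[Theorem 1]{twocolor}.
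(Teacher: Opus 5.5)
Your proof is correct, but it takes a different route from the paper.

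\textbf{The paper's route.} The paper obtains Corollary \ref{p} by setting $k=2$ in Theorem \ref{gen}. It applies the same parity filter you use to $\mathcal{N}_k$. It then applies the Jacobi triple product twice to reach $\frac{(q^{16},-q^6,-q^{10};q^{16})_\infty}{(q;q^2)_\infty(q^2;q^2)_\infty^2}$. Finally it relies, implicitly, on Andrews--Bachraoui's Theorem~1, which gives this same product for $\sum F_0(n)q^n$.

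\textbf{Your route.} You stop before any theta-function manipulation. You observe that both families have the same refined generating function $1/\bigl((q;q^2)_\infty(zq;q^2)_\infty(q^2;q^2)_\infty\bigr)$. For $\mathcal{N}_2$, $z$ marks odd parts of $color(2)$; for $\mathcal{F}$, $z$ marks red parts. Your color bookkeeping is right, so the two filtered series coincide.

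\textbf{What your route buys.}
\begin{itemize}
\item It is self-contained: it needs no appeal to the external product identity for $F_0$.
\item It yields an explicit bijection. Send the odd parts of $color(2)$ to red, and the odd parts of $color(1)$ together with the even parts of $color(2)$ to blue. This preserves $n$ and turns the number of odd parts in $color(2)$ into the number of red parts.
\item That bijection also proves Corollary \ref{gc3} ($N_{3,2}(n)=F_1(n)$) at the same time.
\item It answers, for this case, the paper's closing question about bijective proofs.
\end{itemize}

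\textbf{What the paper's route buys.} It produces the product formula itself, and hence the overlined-part interpretation in Corollary \ref{nnn}. In your write-up that product appears only as a consistency check you did not carry out. Completing it would require the same two Jacobi triple product steps the paper performs.
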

	\begin{corollary}\label{nnn}
			For any nonnegative integer $n$, we have that $F_0(n)$ equals the number
		of partitions of $n$ into two colors (red and blue) wherein the odd parts and the parts
		$\equiv 0\  (\text{mod}\  16) $  may appear only in red color and the parts $\equiv 6,10\  (\text{mod}\  16) $ in blue color may be overlined.
	\end{corollary}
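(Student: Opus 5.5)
The statement to prove is Corollary \ref{nnn}, and the plan is to read it off from two earlier results. By Corollary \ref{p} we have $F_0(n)=N_{2,2}(n)$. So it suffices to identify $N_{2,2}(n)$ combinatorially, and this is exactly what Corollary \ref{gc1} does in the case $k=2$. The plan is therefore to specialise Theorem \ref{gen} to $k=2$ and interpret the resulting product directly.

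Setting $k=2$ in Theorem \ref{gen} gives
$$\sum_{n\ge0}N_{2,2}(n)q^n=\frac{(q^{16};q^{16})_\infty\,(-q^6,-q^{10};q^{16})_\infty}{(q;q^2)_\infty\,(q^2;q^2)_\infty^{2}}.$$
We then split this product into factors.

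\begin{itemize}
\item The factor $1/(q;q^2)_\infty$ generates odd parts in a single colour; call it red.
\item One copy of $1/(q^2;q^2)_\infty$ generates even parts in red.
\item The other copy of $1/(q^2;q^2)_\infty$, together with the factor $(q^{16};q^{16})_\infty$, is
$$\frac{(q^{16};q^{16})_\infty}{(q^2;q^2)_\infty}=\prod_{\substack{m\ \text{even}\\ m\not\equiv 0\ (\mathrm{mod}\ 16)}}\frac{1}{1-q^m}.$$
This generates blue even parts that are not divisible by $16$.
\item Finally, the factor
$$(-q^6,-q^{10};q^{16})_\infty=\prod_{m\equiv 6,10\ (\mathrm{mod}\ 16)}(1+q^m)$$
supplies, for each $m\equiv 6,10 \pmod{16}$, at most one extra copy of $m$. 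We record this as overlining one blue occurrence of $m$, in the same way as $(-q;q)_\infty/(q;q)_\infty$ is the generating function for overpartitions.
\end{itemize}

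Combining these, odd parts and parts $\equiv 0 \pmod{16}$ occur only in red, and blue parts $\equiv 6,10 \pmod{16}$ may be overlined. This matches the description in the statement, and Corollary \ref{p} then transfers it to $F_0(n)$.

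The only delicate point is the overlining convention. The factor $(1+q^m)$ allows an extra part $m$ even when no ordinary blue $m$ is present, so an overlined blue part must be allowed to stand alone. This is the standard overpartition convention: each $m$ may have its first occurrence overlined, and with the factor $1/(1-q^m)$ present the product $(1+q^m)/(1-q^m)$ counts exactly these configurations. Once this is noted, the argument is just coefficient comparison after splitting the product as above.
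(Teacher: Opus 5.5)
Your proposal is correct and follows the paper's own route. The paper obtains this corollary by setting $k=2$ in Theorem \ref{gen} together with $F_0(n)=N_{2,2}(n)$ (Corollary \ref{p}). Your factor-by-factor reading of $(q^{16},-q^6,-q^{10};q^{16})_\infty/\bigl((q;q^2)_\infty(q^2;q^2)_\infty^2\bigr)$, including the overpartition convention for the factors $1+q^m$ with $m\equiv 6,10 \pmod{16}$, just makes explicit what the paper leaves implicit; and if you prefer not to lean on Corollary \ref{p}, $F_0(n)=N_{2,2}(n)$ is immediate because both generating functions equal $\frac{1}{2(q;q)_\infty}\bigl[\frac{1}{(q;q^2)_\infty}+\frac{1}{(-q;q^2)_\infty}\bigr]$.
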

	\begin{theorem}\label{gen2}
		$$	\sum_{n=0}^{\infty}N_{3,k}(n)q^n=q\frac{(q^{16},-q^2,-q^{14};q^{16})_\infty}{(q;q^2)_{\infty}^{k-1}(q^2;q^2)_{\infty}^k}.$$
	\end{theorem}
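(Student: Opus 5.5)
The plan is to compute the generating function by introducing a variable $z$ that marks the odd parts in $color(k)$, extract the odd-in-$z$ component, and reduce everything to a single one-variable $q$-series identity that does not depend on $k$.

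\textbf{Setting up the generating function.} In $\mathcal{N}_k(n)$, $color(1)$ carries only odd parts, while $color(2),\dots,color(k)$ carry arbitrary parts. Attaching $z$ to each odd part in $color(k)$ gives
$$\sum_{n\ge0}\sum_{\lambda\in\mathcal N_k(n)} z^{\#\{\text{odd parts of }\lambda\text{ in }color(k)\}}q^n=\frac{1}{(q;q^2)_\infty^{k-1}(q^2;q^2)_\infty^{k-1}}\cdot\frac{1}{(zq;q^2)_\infty(q^2;q^2)_\infty}.$$
Here the first factor accounts for $color(1)$ together with $color(2),\dots,color(k-1)$, using $(q;q)_\infty=(q;q^2)_\infty(q^2;q^2)_\infty$. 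The second factor accounts for $color(k)$. The series for $N_{3,k}(n)$ is the part that is odd in $z$, obtained by taking half the difference of the specializations $z=1$ and $z=-1$:
$$\sum_{n\ge0}N_{3,k}(n)q^n=\frac{1}{(q;q^2)_\infty^{k-1}(q^2;q^2)_\infty^{k}}\cdot\frac12\left(\frac{1}{(q;q^2)_\infty}-\frac{1}{(-q;q^2)_\infty}\right).$$
Comparing with the claimed product, the theorem reduces to the single $k$-free identity
$$\frac12\left(\frac{1}{(q;q^2)_\infty}-\frac{1}{(-q;q^2)_\infty}\right)=q\,(q^{16},-q^2,-q^{14};q^{16})_\infty. \qquad (\ast)$$

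\textbf{Proving $(\ast)$.} The identity $(\ast)$ is exactly what Andrews and Bachraoui need for their $k=2$ case in \cite{twocolor}. The same reduction should also be used for Theorem \ref{gen}, with the sum in place of the difference and $(q^{16},-q^6,-q^{10};q^{16})_\infty$ on the right. So $(\ast)$ can be quoted from there, and the $k$-dependence then comes for free.

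To make the argument self-contained, I would proceed as follows.
\begin{enumerate}
\item Write the left side of $(\ast)$ as $\dfrac{(-q;q^2)_\infty-(q;q^2)_\infty}{2(q^2;q^4)_\infty}$.
\item Apply Jacobi's triple product in the form $(\pm q;q^2)_\infty^2(q^2;q^2)_\infty=\sum_{n\in\mathbb Z}(\pm1)^nq^{n^2}$. Since this involves the squares $(\pm q;q^2)_\infty^2$ rather than the first powers appearing in the numerator of step 1, I would first multiply numerator and denominator by $(-q;q^2)_\infty+(q;q^2)_\infty$. This turns the numerator into $(-q;q^2)^2_\infty-(q;q^2)^2_\infty=\dfrac{2\sum_{n\ \mathrm{odd}}q^{n^2}}{(q^2;q^2)_\infty}=\dfrac{4q\,(q^8;q^8)_\infty(-q^8;q^8)_\infty^2}{(q^2;q^2)_\infty}$.
\item The denominator then contains $(-q;q^2)_\infty+(q;q^2)_\infty=\dfrac{2\sum_{n\ \mathrm{even}}q^{n^2}}{\dots}$ after the analogous step, i.e.\ it is expressible through $\varphi(q^4)$. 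After these substitutions, $(\ast)$ becomes an identity between theta products with base $q^8$ and $q^{16}$.
\item Prove that remaining product identity by expanding the right side of $(\ast)$ with the triple product as $q\sum_m q^{8m^2-6m}$, and matching it against a dissection of the left side, or alternatively by invoking the quintuple product identity.
\end{enumerate}

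\textbf{Main obstacle.} The combinatorial reduction to $(\ast)$ is routine. The hard step is verifying the product identity in step 4. My first attempt there would be to rely directly on the Andrews--Bachraoui lemma rather than re-derive it.
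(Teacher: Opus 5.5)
Your overall strategy (mark the odd parts of $color(k)$ by $z$, take the odd part in $z$, reduce to a $k$-free product identity) is the paper's. However, the bookkeeping is off, and the identity $(\ast)$ you propose to prove is false.

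Since $color(1)$ carries only odd parts, colors $1,\dots,k-1$ contribute $1/\bigl((q;q^2)_\infty^{k-1}(q^2;q^2)_\infty^{k-2}\bigr)$, not $1/(q;q)_\infty^{k-1}$. At $z=1$ your series is $1/(q;q)_\infty^{k}$, which counts $\mathcal{C}_k(n)$ rather than $\mathcal{N}_k(n)$. For $k=2$ its odd-in-$z$ part has coefficient $5$ at $q^3$, while $N_{3,2}(3)=4$; the extra object is $2+1$ with the $2$ in $color(1)$ and the $1$ in $color(2)$. The correct starting point, as in the paper, is
\begin{equation*}
\sum_{n\ge0}N_{3,k}(n)q^n=\frac{1}{(q;q^2)_\infty^{k-1}(q^2;q^2)_\infty^{k-1}}\cdot\frac12\left(\frac{1}{(q;q^2)_\infty}-\frac{1}{(-q;q^2)_\infty}\right).
\end{equation*}

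Your $(\ast)$ fails already at $q^3$. A partition into odd parts has a number of parts congruent to its size mod $2$, so the left side of $(\ast)$ is $\sum_{n\ \mathrm{odd}}p_{\mathrm{odd}}(n)q^n=q+2q^3+3q^5+\cdots$. The right side is $q(1+q^2)(1+q^{14})\cdots=q+q^3+q^{15}+\cdots$. So step 4 cannot succeed, and no citation of Andrews--Bachraoui will supply $(\ast)$. What is true is
\begin{equation*}
\frac12\left(\frac{1}{(q;q^2)_\infty}-\frac{1}{(-q;q^2)_\infty}\right)=\frac{q\,(q^{16},-q^2,-q^{14};q^{16})_\infty}{(q^2;q^2)_\infty}.
\end{equation*}
This extra $1/(q^2;q^2)_\infty$, together with the corrected prefactor, produces the $(q^2;q^2)_\infty^{k}$ in the theorem. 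Your two errors cancel in the final display, which is why the target looked consistent.

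For the corrected identity you need neither the squaring trick nor the quintuple product. Write the left side as $\bigl((-q;q^2)_\infty-(q;q^2)_\infty\bigr)/\bigl(2(q^2;q^4)_\infty\bigr)$. Then apply the triple product with $q\to q^4$, $x\to\pm q^{-1}$, which handles first powers directly: $(q^4;q^4)_\infty(-q;q^2)_\infty=\sum_{n}q^{2n^2+n}$ and $(q^4;q^4)_\infty(q;q^2)_\infty=\sum_n(-1)^nq^{2n^2+n}$. The difference keeps only odd $n$, and $(q^2;q^4)_\infty(q^4;q^4)_\infty=(q^2;q^2)_\infty$. So the left side equals $(q^2;q^2)_\infty^{-1}\sum_{m\in\mathbb{Z}}q^{8m^2+6m+1}$. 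One more application with $q\to q^{16}$, $x\to q^{-2}$ gives $q\,(q^{16},-q^2,-q^{14};q^{16})_\infty$. This is exactly the paper's proof.

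Your steps 2--3 would not close as written. The ``analogous step'' produces $(-q;q^2)_\infty^2+(q;q^2)_\infty^2=2\varphi(q^4)/(q^2;q^2)_\infty$. That is not the first-power sum $(-q;q^2)_\infty+(q;q^2)_\infty$ which your rationalization leaves in the denominator.
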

	The theorem above immediately gives us the following important corollary.
	\begin{corollary}\label{gc2}
	For any nonnegative integer $n$, we have that $N_{2,k}(n)$ equals the number
of partitions of $n-1$ in $k$ colors wherein the odd parts and the parts
$\equiv 0\  (\text{mod}\  16) $  may appear only in $k-1$ colors and the parts $\equiv 2,14\  (\text{mod}\  16) $ in $color(k)$ may be overlined.
\end{corollary}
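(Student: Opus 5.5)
The plan is to read the corollary off Theorem \ref{gen2} by splitting its product into factors that each have an evident partition meaning. The statement as printed refers to $N_{2,k}(n)$, but since it is drawn from Theorem \ref{gen2} and involves partitions of $n-1$, I take the intended function to be $N_{3,k}(n)$. Corollary \ref{gc1} is the analogous reading of Theorem \ref{gen}.

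First I rewrite the right-hand side of Theorem \ref{gen2} by peeling one factor off $(q^2;q^2)_\infty^k$:
\begin{equation*}
\sum_{n\ge0}N_{3,k}(n)q^n
= q\cdot\frac{1}{(q;q^2)_\infty^{k-1}(q^2;q^2)_\infty^{k-1}}\cdot\frac{(q^{16};q^{16})_\infty}{(q^2;q^2)_\infty}\cdot(-q^2,-q^{14};q^{16})_\infty .
\end{equation*}
The factor $1/\bigl((q;q^2)_\infty^{k-1}(q^2;q^2)_\infty^{k-1}\bigr)=1/(q;q)_\infty^{k-1}$ generates unrestricted partitions in $k-1$ colors, say $color(1),\dots,color(k-1)$.

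Next I interpret the remaining factors as the $color(k)$ contribution. Cancelling $(q^{16};q^{16})_\infty$ against the terms $1-q^{16j}$ in $(q^2;q^2)_\infty$ gives
\[
\frac{(q^{16};q^{16})_\infty}{(q^2;q^2)_\infty}=\prod_{\substack{m\ge1\\ m\equiv 2,4,\dots,14 \ (\mathrm{mod}\ 16)}}\frac{1}{1-q^{m}} .
\]
This generates $color(k)$ parts that are even and not divisible by $16$. The factor $(-q^2,-q^{14};q^{16})_\infty=\prod_{m\equiv 2,14\ (\mathrm{mod}\ 16)}(1+q^m)$ then allows at most one overlined copy of each $color(k)$ part $\equiv 2,14 \pmod{16}$. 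This is exactly the overpartition convention, in which the first occurrence of such a part may be overlined. Combining these, the product without the prefactor $q$ generates partitions in $k$ colors in which odd parts and parts $\equiv 0\pmod{16}$ occur only in the first $k-1$ colors, while the parts $\equiv 2,14\pmod{16}$ in $color(k)$ may be overlined.

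Finally, the prefactor $q$ shifts indices, so comparing coefficients of $q^n$ shows that $N_{3,k}(n)$ equals the number of such partitions of $n-1$. The only point needing care is the overlining step. Multiplying $1/(1-q^m)$ by $(1+q^m)$ for each $m\equiv 2,14 \pmod{16}$ must correspond bijectively to choosing whether to overline the first occurrence of the part $m$ in $color(k)$. This requires that $m$ actually be available in $color(k)$, which holds because $2$ and $14$ are even and not $\equiv 0 \pmod{16}$. Beyond this check the argument is just coefficient comparison.
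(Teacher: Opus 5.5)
Your proposal is correct and follows the paper's route. The paper says only that the corollary is read off Theorem \ref{gen2}, and your factorization $q\cdot\frac{1}{(q;q)_\infty^{k-1}}\cdot\frac{(q^{16};q^{16})_\infty}{(q^2;q^2)_\infty}\cdot(-q^2,-q^{14};q^{16})_\infty$ makes that reading explicit. You are also right that $N_{2,k}(n)$ in the statement is a misprint for $N_{3,k}(n)$: with $N_{2,k}$ the claim already fails at $n=0$, since $N_{2,k}(0)=1$ while there are no partitions of $-1$.
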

	Note that for $k=2$, we present corollaries derived from the work of Andrews and Bachraoui \cite[Theorem 2]{twocolor}.
	\begin{corollary}\label{gc3}
		We have
				$$N_{3,2}(n)=F_1(n).$$
	\end{corollary}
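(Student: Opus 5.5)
Corollary \ref{gc3} follows by specializing Theorem \ref{gen2} to $k=2$ and comparing with the generating function of $F_1(n)$ computed by Andrews and Bachraoui \cite[Theorem 2]{twocolor}, in exactly the way Corollary \ref{p} follows from Theorem \ref{gen}. Setting $k=2$ in Theorem \ref{gen2} gives
$$\sum_{n\ge0}N_{3,2}(n)q^n=q\,\frac{(q^{16},-q^2,-q^{14};q^{16})_\infty}{(q;q^2)_\infty(q^2;q^2)_\infty^2}.$$
So it suffices to show that $\sum_{n\ge0}F_1(n)q^n$ equals this product. That is precisely the content of \cite[Theorem 2]{twocolor}, whose $k=2$ analogue of Corollary \ref{gc2} is the product form above.

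To make the identification self-contained, I would also match the two families of partitions directly. For $k=2$, $\mathcal{N}_2(n)$ consists of partitions in which $color(1)$ carries only odd parts and $color(2)$ carries all parts. Under the relabelling $color(1)\mapsto$ red and $color(2)\mapsto$ blue, this is exactly $\mathcal{F}(n)$, where even parts occur only in blue.

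Under this relabelling, the condition defining $N_{3,2}$ says that the number of odd parts in $color(2)$, i.e. odd blue parts, is odd. The condition defining $F_1$ says that the number of odd red parts is odd.

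To reconcile the two conditions, I would use the color swap on odd parts. The map exchanging red and blue on odd parts only, leaving even parts blue, is a weight-preserving involution of $\mathcal{F}(n)$. It carries the number of odd blue parts to the number of odd red parts. Hence the subset with an odd number of odd blue parts is equinumerous with the subset with an odd number of odd red parts, and so $N_{3,2}(n)=F_1(n)$ bijectively. This argument bypasses the generating function entirely.

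The only real obstacle is bookkeeping of which color plays the role of "red" in \cite{twocolor} versus $color(k)$ here. The swap involution removes any dependence on that convention, so I would present it as the main proof and mention the generating-function agreement with Theorem \ref{gen2} as a consistency check.
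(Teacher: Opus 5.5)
Your proof is correct, and your main argument takes a different route from the paper's.

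\textbf{The paper's route.} The paper proves Corollary~\ref{gc3} in one line: it sets $k=2$ in Theorem~\ref{gen2} and recognizes the resulting product
$$q\,\frac{(q^{16},-q^2,-q^{14};q^{16})_\infty}{(q;q^2)_\infty(q^2;q^2)_\infty^2}$$
as the generating function Andrews and Bachraoui obtained for $F_1(n)$. The identity thus comes from matching two product formulas, one of which is imported from their paper.

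\textbf{Your route.} Your color-swap involution works directly on the sets.
\begin{itemize}
\item For $k=2$, $\mathcal{N}_2(n)$ is literally $\mathcal{F}(n)$ with $color(1)=$ red and $color(2)=$ blue. The restriction on even parts forces this identification.
\item $N_{3,2}$ counts partitions with an odd number of odd blue parts, while $F_1$ counts those with an odd number of odd red parts.
\item Exchanging the colors of all odd parts, and leaving even parts blue, is a weight-preserving involution of $\mathcal{F}(n)$ that interchanges these two statistics.
\end{itemize}

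\textbf{Comparison.} Your argument is rigorous and elementary, and it does not depend on the external theorem or on the red/blue convention. It also shows why the identity holds: it is essentially definitional. The same swap gives Corollary~\ref{p}. Its generating-function shadow is that $\sum_n (N_{2,2}(n)-N_{3,2}(n))q^n$ and $\sum_n (F_0(n)-F_1(n))q^n$ both equal $1/\bigl((q;q^2)_\infty(-q;q^2)_\infty(q^2;q^2)_\infty\bigr)$.

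What your argument does not give is the infinite-product formula, or the overpartition interpretation in Corollary~\ref{nnnn}. For those you still need the Jacobi triple product computation behind Theorem~\ref{gen2}, which you rightly keep only as a consistency check.
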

	\begin{corollary}\label{nnnn}
			For any nonnegative integer $n$, we have that $F_1(n)$ equals the number
		of partitions of $n-1$ into two colors (red and blue) wherein the odd parts and the parts
		$\equiv 0\  (\text{mod}\  16) $  may appear only in red color and the parts $\equiv 2,14\  (\text{mod}\  16) $ in blue color may be overlined.
	\end{corollary}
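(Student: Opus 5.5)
We combine Corollary \ref{gc3} with the case $k=2$ of Theorem \ref{gen2}, and then read the resulting infinite product combinatorially.

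First, Corollary \ref{gc3} gives $F_1(n)=N_{3,2}(n)$. So it suffices to interpret the generating function of $N_{3,2}(n)$. Setting $k=2$ in Theorem \ref{gen2} yields
$$\sum_{n\ge0}F_1(n)q^n=q\,\frac{(q^{16},-q^2,-q^{14};q^{16})_\infty}{(q;q^2)_\infty(q^2;q^2)_\infty^2}.$$
The prefactor $q$ accounts for the shift from $n$ to $n-1$. It therefore remains to show that the quotient
$$G(q):=\frac{(q^{16},-q^2,-q^{14};q^{16})_\infty}{(q;q^2)_\infty(q^2;q^2)_\infty^2}$$
is the generating function for the two-color partitions described in the statement.

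Next, we split $G(q)$ into factors, each with a direct combinatorial meaning:
$$G(q)=\frac{1}{(q;q^2)_\infty}\cdot\frac{1}{(q^2;q^2)_\infty}\cdot\frac{(q^{16};q^{16})_\infty}{(q^2;q^2)_\infty}\cdot(-q^2,-q^{14};q^{16})_\infty.$$
The factors are interpreted as follows.
\begin{enumerate}
\item The factor $1/(q;q^2)_\infty$ generates the odd parts, which occur only in red.
\item The factor $1/(q^2;q^2)_\infty$ generates the red even parts, with no restriction.
\item In the factor $(q^{16};q^{16})_\infty/(q^2;q^2)_\infty=\prod_{j\ge1,\ 8\nmid j}(1-q^{2j})^{-1}$, the numerator cancels exactly the even parts $\equiv 0\pmod{16}$. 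Hence this factor generates the blue parts: even parts not divisible by $16$. Together with item 2, this shows that parts $\equiv 0\pmod{16}$ appear only in red.
\item The factor $(-q^2,-q^{14};q^{16})_\infty$ equals $\prod_{m\equiv 2,14\ (16)}(1+q^m)$. We attach it to the blue part sizes $m\equiv 2,14\pmod{16}$, all of which are allowed in blue by item 3.
\end{enumerate}

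For each such $m$, the combined blue contribution is
$$\frac{1+q^m}{1-q^m}.$$
This is exactly the overpartition-type generating factor in which the first occurrence of the blue part $m$ may or may not be overlined. Putting the factors together, $G(q)$ counts two-color partitions in which odd parts and parts $\equiv0\pmod{16}$ appear only in red, and blue parts $\equiv 2,14\pmod{16}$ may be overlined. This is the claimed set, and comparing coefficients of $q^n$ gives the corollary.

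The only step needing care is the bookkeeping in item 3: the factor $(q^{16};q^{16})_\infty$ must be absorbed into the blue even-part product rather than the red one, so that the restriction on multiples of $16$ falls on the correct color, matching the statement's convention that those parts appear only in red. The rest is routine coefficient comparison.
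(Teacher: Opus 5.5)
Your proposal is correct and follows the paper's route: the paper's proof simply sets $k=2$ in Theorem~\ref{gen2}, with Corollary~\ref{gc3} identifying $N_{3,2}(n)$ with $F_1(n)$. Your factor-by-factor reading of $q\,(q^{16},-q^2,-q^{14};q^{16})_\infty/\bigl((q;q^2)_\infty(q^2;q^2)_\infty^2\bigr)$ is accurate, and it makes explicit the combinatorial interpretation that the paper leaves implicit.
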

	To establish the connection of these partition functions $N_{4,k}(n)$ and $N_{5,k}(n)$ with mex-theory,
assume that A is even, $a$ be a non-negative integer and $\lambda\in\mathcal{C}_k(n)$ such that the parts $\equiv 0\ (\text{mod}\ \frac{A}{2})$ may occur only in ${color(2)},{color(3)},
\dots,{color(k)}$.
	Let $\text{Mex}_{A,a}(\lambda,k)$ be the smallest integer $\equiv a\ (\text{mod}\ A)$ which is not
		part of $\lambda$. Accordingly, let $p_{A,a}(n,k)$ (resp. $\bar{p}_{A,a}(n,k)$) count the number of
		such partitions $\lambda$ of $n$ with
		
		$\text{Mex}_{A,a}(\lambda,k)\equiv a \ \text{mod}\ (2A)\ \ \ (\text{resp}.\ \text{Mex}_{A,a}(\lambda,k)\equiv a+A \ \text{mod}\ (2A))$.
		
	Now we have the following results for $N_{4,k}(n)$  and $N_{5,k}(n)$.
		\begin{theorem}\label{gen3}
		For any $n\ge 0$ we have,
		$$N_{4,k}(n)={p}_{4,2}(n,k),$$
		and
			$$N_{5,k}(n)=\bar{p}_{4,2}(n,k).$$
	\end{theorem}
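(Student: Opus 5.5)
The plan is to compare generating functions on both sides, in the same way as for Theorem \ref{even} and Theorem \ref{main}. The two sides will meet through Gauss's identity
$$\frac{(q;q)_\infty}{(-q;q)_\infty}=\sum_{j=-\infty}^{\infty}(-1)^jq^{j^2},$$
applied with $q$ replaced by $q^2$. Throughout, write
$$G_k(q):=\frac{1}{(q;q^2)_\infty^k(q^2;q^2)_\infty^{k-1}}=\sum_{n\ge0}N_k(n)q^n .$$

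\textbf{The $N$-side.} In $\mathcal{N}_k(n)$ the even parts of $color(k)$ contribute the factor $1/(q^2;q^2)_\infty$. To restrict to partitions with an even number of even parts in $color(k)$, I replace that factor by the parity filter
$$\tfrac12\Bigl(\tfrac{1}{(q^2;q^2)_\infty}+\tfrac{1}{(-q^2;q^2)_\infty}\Bigr),$$
which inserts $z=\pm1$ as a marker on those parts and averages. Multiplying $G_k(q)$ by $(q^2;q^2)_\infty$ times this filter gives
$$\sum_{n\ge0}N_{4,k}(n)q^n=\frac{G_k(q)}{2}\Bigl(1+\frac{(q^2;q^2)_\infty}{(-q^2;q^2)_\infty}\Bigr)=\frac{G_k(q)}{2}\Bigl(1+\sum_{j\in\mathbb Z}(-1)^jq^{2j^2}\Bigr)=G_k(q)\sum_{j\ge0}(-1)^jq^{2j^2}.$$
The last step pairs $j$ with $-j$. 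In the same way, with a minus sign in the filter, the generating function for $N_{5,k}(n)$ is $G_k(q)\sum_{j\ge1}(-1)^{j-1}q^{2j^2}$.

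\textbf{The mex-side.} I read $\text{Mex}_{4,2}(\lambda,k)$ as the smallest integer $\equiv2\pmod4$ missing from the parts of $\lambda$ in $color(k)$, consistent with the earlier mex definitions. The partitions allowed in $p_{4,2}(n,k)$ are exactly those in $\mathcal N_k(n)$, since even parts are barred from $color(1)$, so their unrestricted generating function is $G_k(q)$. The condition $\text{Mex}_{4,2}=4j+2$ says that $2,6,\dots,4j-2$ each occur in $color(k)$ and $4j+2$ does not. Since $\sum_{i=0}^{j-1}(4i+2)=2j^2$, such partitions have generating function
$$G_k(q)\,q^{2j^2}(1-q^{4j+2})=G_k(q)\bigl(q^{2j^2}-q^{2(j+1)^2}\bigr).$$
Now $\text{Mex}\equiv2\pmod 8$ exactly when $j$ is even, so
$$\sum_{n\ge0}p_{4,2}(n,k)q^n=G_k(q)\sum_{j\ \mathrm{even}}\bigl(q^{2j^2}-q^{2(j+1)^2}\bigr)=G_k(q)\sum_{j\ge0}(-1)^jq^{2j^2}.$$
Summing instead over odd $j$ gives $G_k(q)\sum_{j\ge1}(-1)^{j-1}q^{2j^2}$ for $\bar p_{4,2}(n,k)$. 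Comparing coefficients with the $N$-side proves both identities.

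\textbf{Main obstacle.} The delicate step is not the computation but pinning down the definition. The paper's text says "not part of $\lambda$" without naming a color, and the proof needs the mex-constraint to act on a single color whose even-part generating factor is $1/(q^2;q^2)_\infty$. I would first verify this reading against small cases, for instance $n=2,3$ with $k=2$. If the intended reading is "in any color", the telescoping changes: each required part $4i+2$ must then appear in at least one of the colors $2,\dots,k$, and the factor becomes a product of such conditions. In that case I would redo the computation with that factor and expect to need a different theta-type identity in place of Gauss's.
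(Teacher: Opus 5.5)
Your proof is correct and follows essentially the same route as the paper: the parity filter giving $\frac{G_k(q)}{2}\bigl(1\pm\frac{(q^2;q^2)_\infty}{(-q^2;q^2)_\infty}\bigr)$, then the theta identity (the paper gets it from Jacobi's triple product with $q\to q^4$, $x=-q^{-2}$), then reading $q^{2j^2}(1-q^{4j+2})$ as the event $\text{Mex}_{4,2}=4j+2$ and splitting by the parity of $j$. Your reading of $\text{Mex}_{4,2}(\lambda,k)$ as referring to the parts in $color(k)$ is the one the paper's computation and Table \ref{t3} use, so the caveat in your last paragraph is resolved in your favour.
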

	Theorem \ref{gen3} gives us results of Andrews and Bachraoui \cite[Theorem 3, Theorem 4]{twocolor}.
		\begin{corollary}\label{gc4}
	We have
		$$N_{4,2}(n)=F_2(n),$$
		and
			$$N_{5,2}(n)=F_3(n).$$
	\end{corollary}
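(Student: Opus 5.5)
The plan is to prove Corollary \ref{gc4} directly from the definitions, by identifying the set $\mathcal{N}_2(n)$ with $\mathcal{F}(n)$ and checking that this identification matches the parity statistics. Theorem \ref{gen3} is then only needed to recover the mex interpretation of Andrews and Bachraoui.

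First I set $k=2$ in the definition of $\mathcal{N}_k(n)$. This gives the two-color partitions of $n$ in $color(1)$ and $color(2)$ in which even parts may occur only in $color(2)$. Renaming $color(1)$ as red and $color(2)$ as blue turns this into the defining condition of $\mathcal{F}(n)$: odd parts may be red or blue, and even parts may only be blue. So the renaming gives a bijection $\mathcal{N}_2(n)\to\mathcal{F}(n)$ that preserves every part and its multiplicity. As a check, the generating functions agree, since
$$\sum_{n\ge0}N_2(n)q^n=\frac{1}{(q;q^2)_\infty^2(q^2;q^2)_\infty}$$
is also the generating function of $F(n)$.

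Next I compare the statistics. $N_{4,2}(n)$ counts partitions in $\mathcal{N}_2(n)$ whose number of even parts in $color(2)$ is even. No even part can lie in $color(1)$, so every even part lies in $color(2)$. Hence the number of even parts in $color(2)$ equals the total number of even parts. Under the bijection this is exactly the condition defining $F_2(n)$, so $N_{4,2}(n)=F_2(n)$. The same argument with ``odd'' in place of ``even'' gives $N_{5,2}(n)=F_3(n)$.

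Finally, combining this with Theorem \ref{gen3} at $k=2$ gives $F_2(n)=p_{4,2}(n,2)$ and $F_3(n)=\bar p_{4,2}(n,2)$. In $p_{4,2}(n,2)$, parts $\equiv 0 \pmod 2$ may occur only in $color(2)$, so these are again the partitions of $\mathcal{F}(n)$, and the conditions on $\text{Mex}_{4,2}$ are those in Andrews and Bachraoui \cite[Theorems 3, 4]{twocolor}. The only step needing any care is confirming that the color roles line up: $color(2)$ carries the even parts and the counted statistic, and it corresponds to blue in $\mathcal{F}(n)$. Once that is fixed, the argument is purely bookkeeping.
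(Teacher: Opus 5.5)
Your argument is correct, but it takes a different route from the paper. The paper's proof is the one-liner ``take $k=2$ in Theorem~\ref{gen3}''. Read literally, that yields only $N_{4,2}(n)=p_{4,2}(n,2)$ and $N_{5,2}(n)=\bar p_{4,2}(n,2)$. To reach $F_2(n)$ and $F_3(n)$ from there, one must either import Andrews--Bachraoui's mex characterization of $F_2,F_3$ \cite[Theorems 3, 4]{twocolor}, or tacitly use exactly the identification you make explicit.

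You instead note two things:
\begin{itemize}
\item at $k=2$, the set $\mathcal{N}_2(n)$ is $\mathcal{F}(n)$ once $color(2)$ is renamed blue;
\item no even part can lie in $color(1)$, so ``even parts in $color(2)$'' is the same statistic as ``even parts''.
\end{itemize}
This proves the displayed identities with no generating functions at all. Because the bijection preserves multiplicities, the convention used for ``number of even parts'' does not matter.

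Your route shows that the corollary is a definitional specialization, independent of Theorem~\ref{gen3}. Combining it with Theorem~\ref{gen3} at $k=2$ then gives an independent proof of Andrews--Bachraoui's Theorems 3 and 4, rather than relying on them. The paper's framing adds only the viewpoint that these are the $k=2$ case of the $k$-color mex theorem.

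One small point would tighten your last paragraph. At $k=2$ it makes no difference whether $\text{Mex}_{4,2}$ is taken over all parts of $\lambda$ (as the definition says) or only over $color(2)$ (as the generating function in the proof of Theorem~\ref{gen3} effectively uses). Integers $\equiv 2 \pmod 4$ are even, so they can occur only in $color(2)$. Hence the match with $p_{4,2}(n,2)$ is unambiguous.
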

We illustrate these results by considering the following example.
\begin{example}
We take $n=3$, $
k=3$ and $A=4$, $a=2$.
\begin{center}
	\renewcommand{\arraystretch}{1.2}
	\begin{table}[htbp]
		\centering
		\renewcommand{\arraystretch}{1.2}
		\begin{tabular}{c|cccc}
			\hline
			\textbf{$N_{3}(3)$} 
			& \textbf{\hspace{0.9cm}$N_{4,3}(3)$} \qquad\qquad
			& \textbf{\hspace{0.9cm}$N_{5,3}(3)$} \qquad\qquad
			& \textbf{\hspace{0.9cm}${p}_{4,2}(3,3)$} \qquad\qquad
			& \textbf{${\bar{p}}_{4,2}(3,3)$} \\
			\hline
			$3_b$           &  	$3_b$            &     $1_b+2_g$        &	$3_b$        &	$1_b+2_g$           \\
			$3_r$          &     	$3_r$        &    $1_r+2_g$       &	$3_r$ &    	$1_r+2_g$       \\
				$3_g$          &     	$3_g$        &    $1_g+2_g$       &	$3_g$ &    	$1_g+2_g$       \\
			$1_b+2_r$            &    		$1_b+2_r$          &         &		$1_b+2_r$     & 	           \\
				$1_b+2_g$            &    		$1_r+2_r$                &   & $1_r+2_r$   &         \\
			$1_r+2_g$       & 		$1_g+2_r$       &        & 	$1_g+2_r$	      &            \\
			$1_r+2_r$       & 		$1_g+1_g+1_g$       &        & 		$1_b+1_b+1_b$         &            \\
			$1_g+2_r$       & 	$1_b+1_b+1_b$  	      &        &     $1_b+1_b+1_b$      &            \\
				$1_r+2_r$       & 	$1_r+1_r+1_r$ 	     &        &  $1_g+1_g+1_g$   &            \\
				$1_b+1_b+1_b$         &   $1_b+1_b+1_r$          &           &  	$1_b+1_b+1_r$   	             &        \\
			$1_r+1_r+1_r$      &  		$1_b+1_b+1_g$          &   &   $1_b+1_b+1_g$    &           \\
				$1_g+1_g+1_g$         &   	   $1_b+1_r+1_g$             &           &   	$1_b+1_r+1_g$        &        \\
			$1_b+1_b+1_r$         &     	$1_r+1_r+1_g$        &           &  	$1_r+1_r+1_g$         &        \\
			$1_b+1_b+1_g$         &      $1_r+1_g+1_g$ 	         &           & $1_r+1_g+1_g$           &       \\
				$1_b+1_r+1_g$         &    $1_b+1_g+1_g$        &           &  $1_b+1_g+1_g$          &       \\
					$1_b+1_g+1_g$         & $1_b+1_r+1_r$            &           & $1_b+1_r+1_r$        &       \\
						$1_b+1_r+1_r$         &            &           &        &       \\
				$1_r+1_r+1_g$         &            &           &        &       \\
					$1_r+1_g+1_g$         &            &           &        &       \\
			\hline
		\end{tabular}
		\caption{\ }
		\label{t3}
	\end{table}
\end{center}
The table \ref{t3} shows that $N_{4,3}(3)=16={p}_{4,2}(3,3)$ and $N_{5,3}(3)=3={\bar{p}}_{4,2}(3,3)$.
\end{example}	
This paper is organized as follows. Section \ref{proof} presents the proofs for all results stated in section \ref{n}. While the proofs for results in section \ref{g} are provided in section \ref{gene}.
	\section{Proofs for new classes of partitions}\label{proof}
		\begin{proof}[\textbf{Theorem \textup{\ref{even}}}][]
				It is clear that,
			\begin{align}\label{diff1}
				\sum_{n=0}^{\infty}\left(C_{k}^e(n)-C_{k}^o(n)\right)q^n=\frac{1}{(q;q)_{\infty}^{k-1}(-q;q)_\infty},
			\end{align}
			and
			\begin{align}\label{sum1}
				\sum_{n=0}^{\infty}\left(C_{k}^e(n)+C_{k}^o(n)\right)q^n=\frac{1}{(q;q)_{\infty}^{k}}.
			\end{align}
			Now adding \eqref{diff1} and \eqref{sum1}, we have
			\begin{align}
				\sum_{n=0}^{\infty}C_{k}^e(n)q^n&=\frac{1}{2(q;q)_{\infty}^{k-1}}\left[\frac{1}{(-q;q)_\infty}+\frac{1}{(q;q)_\infty}\right]\nonumber\\
				&=\frac{1}{2(q;q)_{\infty}^{k-1}(q;q)_\infty}\left[\frac{(q;q)_\infty}{(-q;q)_\infty}+1\right].\label{j}
			\end{align}
			Now using Jacobi's triple product identity \cite[p. 12]{hyper}
			\begin{align}
				\sum_{n=-\infty}^{\infty}x^nq^{\frac{n(n+1)}{2}}=(q,-xq,-1/x;q)_\infty \label{jacobi}.
			\end{align}
			From \eqref{j}, we get
			\begin{align*}
				\sum_{n=0}^{\infty}C_{k}^e(n)q^n&=\frac{1}{2(q;q)_{\infty}^{k}}\left[\sum_{n=-\infty}^{\infty}(-1)^nq^{n^2}+1\right]\\
				&=\frac{1}{(q;q)_{\infty}^{k}}\sum_{n=0}^{\infty}(-1)^nq^{n^2}.
			\end{align*}
			On separating the domain of summation into even and odd indices, we find that
			$$\sum_{n=0}^{\infty}C_{k}^e(n)q^n		=\frac{1}{(q;q)_{\infty}^{k}}\sum_{n=0}^{\infty}q^{4n^2}(1-q^{4n+1}).$$
			Now using the basic fact that, $1+3+5+\dots+(2n-1)=n^2,$
			$$\sum_{n=0}^{\infty}C_{k}^e(n)q^n	=\frac{1}{(q;q)_{\infty}^{k-1}(q;q)_\infty}\sum_{n=0}^{\infty}q^{1+3+5+\dots+(2(2n)-1)}(1-q^{4n+1})$$
			$$=\frac{1}{(q;q)_{\infty}^{k-1}}\sum_{n=0}^{\infty}\frac{q^{1+3+5+\dots+(4n-1)}}{\prod\limits_{\substack{j=1 \\ j \neq 4n+1}}^{\infty}(1-q^{j})}.$$
			Now subtracting \eqref{diff1} from \eqref{sum1}, we find that
			\begin{align*}
				\sum_{n=0}^{\infty}C_{k}^o(n)q^n&=\frac{1}{2(q;q)_{\infty}^{k-1}}\left[\frac{1}{(q;q)_\infty}-\frac{1}{(-q;q)_\infty}\right]\\
				&=\frac{1}{2(q;q)_{\infty}^{k-1}(q;q)_\infty}\left[-\frac{(q;q)_\infty}{(-q;q)_\infty}+1\right].
			\end{align*}
			Now using the fact \eqref{jacobi}, we obtain
			\begin{align*}
				\sum_{n=0}^{\infty}C_{k}^o(n)q^n&=\frac{1}{2(q;q)_{\infty}^{k}}\left[1-\sum_{n=-\infty}^{\infty}(-1)^nq^{n^2}\right]\\
				&=-\frac{1}{(q;q)_{\infty}^{k}}\sum_{n=1}^{\infty}(-1)^nq^{n^2}\\
				&=\frac{1}{(q;q)_{\infty}^{k}}\sum_{n=0}^{\infty}(-1)^nq^{n^2+1+2n}\\
				&=\frac{1}{(q;q)_{\infty}^{k}}\sum_{n=0}^{\infty}q^{4n^2+4n+1}(1-q^{4n+3})\\
				&=\frac{1}{(q;q)_{\infty}^{k-1}}\sum_{n=0}^{\infty}\frac{q^{1+3+5+\dots+(2(2n+1)-1)}}{\prod\limits_{\substack{j=1 \\ j \neq 4n+3}}^{\infty}(1-q^{j})}.
			\end{align*}
			Hence the result.
		\end{proof}
			\begin{proof}[\textbf{Corollary \textup{\ref{c1}}}][]
			Put $k=1$ in \eqref{j}, we obtain,
					\begin{align}
						\sum_{n=0}^{\infty}C_{1}^e(n)q^n &=\frac{1}{2}\left[\frac{1}{(-q;q)_\infty}+\frac{1}{(q;q)_\infty}\right]\nonumber\\
						&=\frac{1}{2}\left[\sum_{n=0}^{\infty}\frac{(-1)^nq^n}{(q;q)_n}+\sum_{n=0}^{\infty}\frac{q^n}{(q;q)_n}\right]\label{id}\\
						&=\frac{1}{2}\sum_{n=0}^{\infty}\frac{(1+(-1)^n)q^n}{(q;q)_n}\nonumber\\
						&=\sum_{n=0}^{\infty}\frac{q^{2n}}{(q;q)_{2n}}=\sum_{n=0}^{\infty}P_e(n)q^n,\nonumber			
				\end{align}
				where in \eqref{id} we use \cite[p.~19, Cor.~2.2]{andrewsbook},
					for $|t|<1$, $|q|<1$,
				$$1+\sum_{n=1}^{\infty}\frac{t^n}{(q;q)_n}=\prod_{n=0}^{\infty}(1-tq^n)^{-1}.$$
			\begin{proof}[\textbf{Theorem \textup{\ref{new}}}][]
				Note that,
				\begin{align*}
			\sum_{n=0}^{\infty}(C_k(n)-C_k(n-1))q^n&=\frac{1}{(q;q)_\infty^k}-\frac{q}{(q;q)_\infty^k}\\
			&=\frac{(1-q)}{(q;q)_\infty^k}=\frac{1}{(q;q)_\infty^k}\left[\sum_{n=0}^{\infty}q^{4n^2}(1-q^{4n+1})-\sum_{n=1}^{\infty}q^{4n^2}(1-q^{4n+1})\right]\\
			&=\frac{1}{(q;q)_\infty^{k}}\left[\sum_{n=0}^{\infty}q^{1+3+5+\dots+(2(2n)-1)}(1-q^{4n+1})\right.\\
				&\ \ \ \ \ \ \ \ \ \ \ \ \ \ \ \left.-\sum_{n=1}^{\infty}q^{1+3+5+\dots+(2(2n)-1)}(1-q^{4n+1})\right]\\
					&=\frac{1}{(q;q)_\infty^{k-1}}\left[\sum_{n=0}^{\infty}\frac{q^{1+3+5+\dots+(2(2n)-1)}}{\prod\limits_{\substack{j=1 \\ j \neq 4n+1}}^{\infty}(1-q^{j})}-\sum_{n=1}^{\infty}\frac{q^{1+3+5+\dots+(2(2n)-1)}}{\prod\limits_{\substack{j=1 \\ j \neq 4n+1}}^{\infty}(1-q^{j})}\right]\\
					&=\sum_{n=0}^{\infty}\mathcal{P}_{2,1}(n,k)-\mathcal{P}_{2,1}^{>1}(n,k)q^n,
				\end{align*}
				on compairing coefficients of $q^n$ on both sides, we get the desired result.
			\end{proof}
			
					\begin{proof}[\textbf{Corollary \textup{\ref{new1}}}][]
						Putting $k=1$ in Theorem \ref{new}, we obtain
						$$P(n)-P(n-1)=\mathcal{P}_{2,1}(n,1)-\mathcal{P}_{2,1}^{>1}(n,1).$$
						Now using Corollary \ref{c1}, i.e.
						$$\mathcal{P}_{2,1}(n,1)=P_e(n).$$
			This implies,
							$$P_e(n)-P(n)+P(n-1)=\mathcal{P}_{2,1}^{>1}(n,1).$$
						\end{proof}
							\begin{proof}[\textbf{Corollary \textup{\ref{new2}}}][]
								This immediately follows from Corollary \ref{new1}.
				\end{proof}
					\end{proof}
	\begin{proof}[\textbf{Theorem \textup{\ref{main}}}][]
			It is clear that,
		\begin{align}\label{diff}
			\sum_{n=0}^{\infty}\left(N_{0,k}(n)-N_{1,k}(n)\right)q^n=\frac{1}{(q;q^2)_{\infty}^{k-1}(q^2;q^2)_{\infty}^{k-2}(-q;q)_\infty},
		\end{align}
		and
		\begin{align}\label{sum}
			\sum_{n=0}^{\infty}\left(N_{0,k}(n)+N_{1,k}(n)\right)q^n=\frac{1}{(q;q^2)_{\infty}^{k-1}(q^2;q^2)_{\infty}^{k-2}(q;q)_\infty}.
		\end{align}
		Now adding \eqref{diff} and \eqref{sum}, we find that
		\begin{align*}
			\sum_{n=0}^{\infty}N_{0,k}(n)q^n&=\frac{1}{2(q;q^2)_{\infty}^{k-1}(q^2;q^2)_{\infty}^{k-2}}\left[\frac{1}{(-q;q)_\infty}+\frac{1}{(q;q)_\infty}\right]\\
			&=\frac{1}{2(q;q^2)_{\infty}^{k-1}(q^2;q^2)_{\infty}^{k-2}(q;q)_\infty}\left[\frac{(q;q)_\infty}{(-q;q)_\infty}+1\right].
		\end{align*}
		 Jacobi's triple product identity \eqref{jacobi} implies that
		\begin{align*}
				\sum_{n=0}^{\infty}N_{0,k}(n)q^n&=\frac{1}{2(q;q^2)_{\infty}^{k-1}(q^2;q^2)_{\infty}^{k-1}(q;q^2)_\infty}\left[\sum_{n=-\infty}^{\infty}(-1)^nq^{n^2}+1\right]\\
				&=\frac{1}{(q;q^2)_{\infty}^{k-1}(q^2;q^2)_{\infty}^{k-1}(q;q^2)_\infty}\sum_{n=0}^{\infty}(-1)^nq^{n^2}\\
				&=\frac{1}{(q;q^2)_{\infty}^{k-1}(q^2;q^2)_{\infty}^{k-1}(q;q^2)_\infty}\sum_{n=0}^{\infty}q^{4n^2}(1-q^{4n+1})\\
				&=\frac{1}{(q;q^2)_{\infty}^{k-1}(q^2;q^2)_{\infty}^{k-2}}\sum_{n=0}^{\infty}\frac{q^{1+3+5+\dots+4n-1}}{\prod\limits_{\substack{j=1 \\ j \neq 4n+1}}^{\infty}(1-q^{j})}.
		\end{align*}
		This proves the first part, we now prove the second part. To that end, subtracting \eqref{diff} and \eqref{sum}, we have
			\begin{align*}
				\sum_{n=0}^{\infty}N_{1,k}(n)q^n&=\frac{1}{2(q;q^2)_{\infty}^{k-1}(q^2;q^2)_{\infty}^{k-2}}\left[\frac{1}{(q;q)_\infty}-\frac{1}{(-q;q)_\infty}\right]\\
			&=	\frac{1}{2(q;q^2)_{\infty}^{k-1}(q^2;q^2)_{\infty}^{k-2}(q;q)_\infty}\left[1-\frac{(q;q)_\infty}{(-q;q)_\infty}\right].
			\end{align*}
		Now, using \eqref{jacobi} with $x\rightarrow-\frac{1}{q}$ and $q\rightarrow q^2$, we have
			\begin{align*}
				\sum_{n=0}^{\infty}N_{1,k}(n)q^n	&=\frac{1}{2(q;q^2)_{\infty}^{k-1}(q^2;q^2)_{\infty}^{k-2}(q;q)_\infty}\left[1-\sum_{n=-\infty}^{\infty}(-1)^{n}q^{n^2}\right]\\
				&=\frac{1}{(q;q^2)_{\infty}^{k-1}(q^2;q^2)_{\infty}^{k-1}(q;q^2)_\infty}\sum_{n=1}^{\infty}(-1)^{n+1}q^{n^2}\\
				&=\frac{1}{(q;q^2)_{\infty}^{k-1}(q^2;q^2)_{\infty}^{k-1}(q;q^2)_\infty}\sum_{n=0}^{\infty}(-1)^{n}q^{(n+1)^2}\\
				&=\frac{1}{(q;q^2)_{\infty}^{k-1}(q^2;q^2)_{\infty}^{k-1}(q;q^2)_\infty}\sum_{n=0}^{\infty}q^{4n^2+4n+1}(1-q^{4n+3})\\
				&=\frac{1}{(q;q^2)_{\infty}^{k-1}(q^2;q^2)_{\infty}^{k-2}}\sum_{n=0}^{\infty}\frac{q^{1+3+5+\dots+4n+1}}{\prod\limits_{\substack{j=1 \\ j \neq 4n+3}}^{\infty}(1-q^{j})}\\
					&=\sum_{n=0}^{\infty}\bar{P}_{4,2}(n,k)q^n. 
			\end{align*}
			This completes the proof.
			\end{proof}
				\begin{proof}[\textbf{Theorem \textup{\ref{over}}}][]
				We already observed that,
				\begin{equation}\label{27}
					N_{0,k}(n)-N_{1,k}(n)=C_{k-2}(n).
				\end{equation}
					Now,
				\begin{align}
					\sum_{n=0}^{\infty}\left(N_{0,k}(n)+N_{1,k}(n)\right)q^n&=\frac{1}{(q;q^2)_{\infty}^{k-1}(q^2;q^2)_{\infty}^{k-2}(q;q)_\infty}\nonumber\\
					&=\frac{(-q;q)_\infty}{(q;q^2)_{\infty}^{k-1}(q^2;q^2)_{\infty}^{k-1}}\nonumber\\
					&=\frac{(-q;q)_\infty}{(q;q)_{\infty}^{k-1}}\nonumber\\
					&=\sum_{n=0}^{\infty}\bar{C}_{k-1}(n)q^n. \label{28}
				\end{align}
			Now	addition of equations \eqref{27} and \eqref{28} yields
			$$	N_{0,k}(n)=\frac{\bar{{C}}_{k-1}+{C}_{k-2}(n)}{2},$$
			and subtraction of equations \eqref{27} and \eqref{28} yields
			$$N_{1,k}(n)=\frac{\bar{{C}}_{k-1}-{C}_{k-2}(n)}{2},$$
			which completes the proof.
			\end{proof}
				\begin{proof}[\textbf{Theorem \textup{\ref{O}}}][]
						\begin{align*}
							\sum_{n\ge0}^{}{R}_{3}(n)q^n&=(-q;q^2)_{\infty}^{3}(-q^2;q^2)_{\infty}^{2}\\
							&=(-q;q)^2_\infty(-q;q^2)_\infty\\
							&=\frac{(-q;q^2)_\infty}{(q;q^2)_\infty^2}\\
							&\equiv\frac{1}{(q;q^2)_\infty} \pmod 2\\
							&\equiv(q;q)_\infty\pmod 2.
						\end{align*}
						By using Euler's pentagonal number theorem (\cite[p.~10, Theorem 1.6]{andrewsbook}),
						\begin{align*}
							P_e(d,n)-P_o(d,n)=
							\begin{cases}
								(-1)^m, & \textup{if}\ n=\frac{1}{2}m(3m\pm1) \\
								0, & \text{otherwise},
							\end{cases}
						\end{align*}
						where $P_e(d,n)$ (resp. $P_o(d,n)$) denote the number of partitions
						of $n$ into an even (resp. odd) number of distinct parts, we completes our proof.
					\end{proof}
						\begin{proof}[\textbf{Theorem \textup{\ref{R1}}}][]
							We already prove that,
							\begin{align}\label{10}
										R_{0,3}(n)+R_{1,3}(n)=\bar{O}_2(n). 
							\end{align}
			Now,
						\begin{align}
							\sum_{n=0}^{\infty}\left(R_{0,3}(n)-R_{1,3}(n)\right)q^n&=(-q;q^2)_{\infty}^{2}(-q^2;q^2)_{\infty}^{}(q;q)_\infty\nonumber\\
							&=(-q;q^2)_{\infty}^{}(-q;q^2)_{\infty}^{}(-q^2;q^2)_\infty(q;q)_\infty\nonumber\\
							&={(q^2;q^2)_{\infty}^{}(-q;q^2)_{\infty}}\nonumber\\
							&=f(q;-q^2),\nonumber
						\end{align}
						where $f(q;-q^2)$ is the famous general Ramanujan theta function with $a=q$ and $b=-q^2$ and defined by \cite[Definition 1.2.1, P.~6]{bruce}
						$$f(a,b)=\sum_{n=-\infty}^{\infty}a^{\frac{m(m+1)}{2}}b^{\frac{m(m-1)}{2}},\ \ \ \ |ab|<1.
						$$
						Therefore,
						\begin{align}
								\sum_{n=0}^{\infty}\left(R_{0,3}(n)-R_{1,3}(n)\right)q^n&=\sum_{m=-\infty}^{\infty}(-1)^{\frac{m}{2}(3m+1)}q^{\frac{m}{2}(3m-1)}\nonumber\\
								&=\sum_{m=0}^{\infty}(-1)^{\frac{m}{2}(3m+1)}q^{\frac{m}{2}(3m-1)}+\sum_{-\infty}^{1}(-1)^{\frac{m}{2}(3m+1)}q^{\frac{m}{2}(3m-1)}\nonumber\\
							&=\sum_{m=0}^{\infty}(-1)^{\frac{m}{2}(3m+1)}q^{\frac{m}{2}(3m-1)}+\sum_{m=1}^{\infty}(-1)^{\frac{m}{2}(3m-1)}q^{\frac{m}{2}(3m+1)}\nonumber\\
							&={1+\sum_{m=1}^{\infty} (-1)^{\frac{m(3m\mp1)}{2}q^{\frac{m(3m\pm1)}{2}}}}\nonumber\\
									&={1+\sum_{m=1}^{\infty} (-1)^{\frac{m(3m\mp1)}{2}q^{\frac{m(3m\pm1)}{2}}}}.\label{a}
						\end{align}
						Now addition of \eqref{10} and \eqref{a} yields
							\item $$R_{0,3}(n) = \begin{cases} \frac{\bar{O}_{2}(n) }{2} + \frac{(-1)^{\frac{m(3m\mp1)}{2}}}{2} & \text{if } n=\frac{m(3m\pm1)}{2} \\
							0& \text{otherwise,} \end{cases}$$
							
						and subtraction of \eqref{10} and \eqref{a} yields
							\item $$R_{1,3}(n) = \begin{cases} \frac{\bar{O}_{2}(n) }{2} - \frac{(-1)^{\frac{m(3m\mp1)}{2}}}{2} & \text{if } n=\frac{m(3m\pm1)}{2} \\
							0& \text{otherwise.} \end{cases}$$
					\end{proof}
				\begin{proof}[\textbf{Corollary \textup{\ref{c3}}}][]
				Equation \eqref{a} implies that,
				$$	\sum_{n=0}^{\infty}\left(R_{0,3}(n)-R_{1,3}(n)\right)q^n={1+\sum_{m=1}^{\infty} (-1)^{\frac{m(3m\mp1)}{2}}q^{\frac{m(3m\pm1)}{2}}},$$
				now on compairing coefficient of $q^n$ both sides, we obtain the required result.
				\end{proof}
	\section{Proofs for Generalizations of partition functions of Andrews-Bachraoui}\label{gene}
		\begin{proof}[\textbf{Theorem \textup{\ref{gen}}}][]
			Clearly,
				\begin{align}\label{1}	\sum_{n=0}^{\infty}\left(N_{2,k}(n)-N_{3,k}(n)\right)q^n=\frac{1}{(q;q^2)_{\infty}^{k-1}(q^2;q^2)_{\infty}^{k-1}(-q;q^2)_\infty},
					\end{align}
				and
				\begin{align}\label{2}	\sum_{n=0}^{\infty}\left(N_{2,k}(n)+N_{3,k}(n)\right)q^n=\frac{1}{(q;q^2)_{\infty}^{k-1}(q^2;q^2)_{\infty}^{k-1}(q;q^2)_\infty}.
					\end{align}
				Now, adding \eqref{1} and \eqref{2}, we get
				\begin{align}
					\sum_{n=0}^{\infty}N_{2,k}(n)q^n&=\frac{1}{2(q;q^2)_{\infty}^{k-1}(q^2;q^2)_{\infty}^{k-1}(q^2;q^4)_\infty}\left[(-q;q^2)_\infty+(q;q^2)_\infty\right].\nonumber\\
						\end{align}
							Applying the transformations $x\rightarrow\frac{1}{q}$ and $q\rightarrow q^4$, and subsequently $x\rightarrow-\frac{1}{q}$ with $q\rightarrow q^4$ to \eqref{jacobi}, we obtain
									\begin{align}
			&=\frac{1}{(q;q^2)_{\infty}^{k-1}(q^2;q^2)_{\infty}^{k}}\sum_{n=-\infty}^{\infty}q^{2n^2+n}(1+(-1)^n)\nonumber\\
					&=\frac{1}{(q;q^2)_{\infty}^{k-1}(q^2;q^2)_{\infty}^{k}}\sum_{n=-\infty}^{\infty}q^{8n^2+2n}\nonumber\\
					&=\frac{(q^{16},-q^6,-q^{10};q^{16})_\infty}{(q;q^2)_{\infty}^{k-1}(q^2;q^2)_{\infty}^k}\nonumber,
				\end{align}
				where in last step we applied \eqref{jacobi} with $x\rightarrow\frac{1}{q^6}$ and $q\rightarrow q^{16}$.
	\end{proof}
		\begin{proof}[\textbf{Corollary \textup{\ref{p}}}][]
		This immediately follows by taking $k=2$ in Theorem \ref{gen}.
	\end{proof}
		\begin{proof}[\textbf{Corollary \textup{\ref{nnn}}}][]
		This immediately follows by taking $k=2$ in Theorem \ref{gen}.
	\end{proof}
	\begin{proof}[\textbf{Theorem \textup{\ref{gen2}}}][]
	Subtracting \eqref{1} from \eqref{2}, we obtain
		\begin{align}
			\sum_{n=0}^{\infty}N_{3,k}(n)q^n&=\frac{1}{2(q;q^2)_{\infty}^{k-1}(q^2;q^2)_{\infty}^{k-1}(q^2;q^4)_\infty}\left[(-q;q^2)_\infty-(q;q^2)_\infty\right]\nonumber\\
			&=\frac{1}{(q;q^2)_{\infty}^{k-1}(q^2;q^2)_{\infty}^{k}}\sum_{n=-\infty}^{\infty}q^{2n^2+n}(1-(-1)^n)\nonumber\\
			&=\frac{1}{(q;q^2)_{\infty}^{k-1}(q^2;q^2)_{\infty}^{k}}\sum_{n=-\infty}^{\infty}q^{8n^2+6n+1}\nonumber\\
			&=q\frac{(q^{16},-q^2,-q^{14};q^{16})_\infty}{(q;q^2)_{\infty}^{k-1}(q^2;q^2)_{\infty}^k},\nonumber
		\end{align}
		where in last step we applied \eqref{jacobi} with $x\rightarrow\frac{1}{q^2}$ and $q\rightarrow q^{16}$.
		\end{proof}
			\begin{proof}[\textbf{Corollary \textup{\ref{gc3}}}][]
			This immediately follows by taking $k=2$ in Theorem \ref{gen2}.
		\end{proof}
		\begin{proof}[\textbf{Corollary \textup{\ref{nnnn}}}][]
			This immediately follows by taking $k=2$ in Theorem \ref{gen2}.
		\end{proof}
		\begin{proof}[\textbf{Theorem \textup{\ref{gen3}}}][]
			Observe that,
			\begin{align} 
				\sum_{n \geq 0} (N_{4,k}(n) - N_{5,k}(n)) q^n &= \frac{1}{(-q^2; q^2)_{\infty}(q^2; q^2)_{\infty}^{k-2} (q; q^2)_{\infty}^k},\label{3} \\
				\sum_{n \geq 0} (N_{4,k}(n) + N_{5,k}(n)) q^n &= \frac{1}{(q^2; q^2)_{\infty}^{k-1} (q; q^2)_{\infty}^k}. \label{4}
			\end{align}
			Then by adding \eqref{3} and \eqref{4} we have,
			\begin{align}
				\sum_{n \ge 0} N_{4,k}(n) q^n &= \frac{1}{2} \left( \frac{1}{(q^2; q^2)_{\infty}^{k-1} (q; q^2)_{\infty}^k} + \frac{1}{(-q^2; q^2)_{\infty}(q^2; q^2)_{\infty}^{k-2} (q; q^2)_{\infty}^k} \right) \nonumber \\
				&= \frac{1}{2 (q^2; q^2)_{\infty}^{k-1} (q; q^2)_{\infty}^k} \left( 1 + \frac{(q^2; q^2)_{\infty}}{(-q^2; q^2)_{\infty}} \right). \nonumber\\
				&= \frac{1}{2 (q^2; q^2)_{\infty}^{k-1} (q; q^2)_{\infty}^k} \left( 1 + (q^2; q^4)_{\infty} (q^4; q^4)_{\infty} (q^2; q^4)_{\infty} \right)\nonumber \\
				&= \frac{1}{2 (q^2; q^2)_{\infty}^{k-1} (q; q^2)_{\infty}^k} \left( 1 + (q^4; q^4)_{\infty} (q^2; q^4)_{\infty}^2 \right).\nonumber\\
			\end{align}
			Now with \eqref{jacobi} with $q\rightarrow q^4$ and $x=-q^{-2}$,
			\begin{align}
				\sum_{n \ge 0} N_{4,k}(n) q^n &= \frac{1}{2 (q^2; q^2)_{\infty}^{k-1} (q; q^2)_{\infty}^k} \left( 1 + \sum_{n=-\infty}^{\infty} (-1)^n q^{2n^2} \right) \label {4.4} \\
				&= \frac{1}{(q^2; q^2)_{\infty}^{k-1} (q; q^2)_{\infty}^k} \sum_{n \ge 0} q^{8n^2} (1 - q^{8n+2}) \nonumber \\
				&= \frac{1}{(q^2;q^2)_{\infty}^{k-2}(q; q^2)_{\infty}^{k-1}} \sum_{n \ge 0} \frac{q^{2+6+10+\dots+(8n-2)}}{	 \prod\limits_{\substack{j=1 \\ j \neq 8n+2}}^{\infty} (1 - q^{j})}\nonumber\\
					&=\sum_{n=0}^{\infty}{p}_{4,2}(n,k)q^n. 
			\end{align}
		Subtracting \eqref{3} from \eqref{4}, we find that
				\begin{align}
				\sum_{n \ge 0} N_{5,k}(n) q^n &= \frac{1}{2} \left( \frac{1}{(q^2; q^2)_{\infty}^{k-1} (q; q^2)_{\infty}^k} - \frac{1}{(-q^2; q^2)_{\infty}(q^2; q^2)_{\infty}^{k-2} (q; q^2)_{\infty}^2} \right) \nonumber \\
				&= \frac{1}{2 (q^2; q^2)_{\infty}^{k-1} (q; q^2)_{\infty}^k} \left( 1 -\frac{(q^2; q^2)_{\infty}}{(-q^2; q^2)_{\infty}} \right) \nonumber\\
				&= \frac{1}{2 (q^2; q^2)_{\infty}^{k-1} (q; q^2)_{\infty}^k} \left( 1 -(q^2; q^4)_{\infty} (q^4; q^4)_{\infty} (q^2; q^4)_{\infty} \right)\nonumber \\
				&= \frac{1}{2 (q^2; q^2)_{\infty}^{k-1} (q; q^2)_{\infty}^k} \left( 1 - (q^4; q^4)_{\infty} (q^2; q^4)_{\infty}^2 \right).\nonumber
			\end{align}
			Now with \eqref{jacobi} with $q\rightarrow q^4$ and $x=-q^{-2}$,
			\begin{align}
				\sum_{n \ge 0} N_{5,k}(n) q^n &= \frac{1}{2 (q^2; q^2)_{\infty}^{k-1} (q; q^2)_{\infty}^k} \left( 1 + \sum_{n=-\infty}^{\infty} (-1)^{n+1} q^{2n^2} \right)\nonumber\\
				&= \frac{1}{ (q^2; q^2)_{\infty}^{k-1} (q; q^2)_{\infty}^k} \left(\sum_{n=1}^{\infty} (-1)^{n+1} q^{2n^2} \right)\nonumber\\
				&= \frac{1}{ (q^2; q^2)_{\infty}^{k-1} (q; q^2)_{\infty}^k} \left(\sum_{n=0}^{\infty} (-1)^{n} q^{2n^2+4n+2} \right) \nonumber\\
				&= \frac{1}{(q^2; q^2)_{\infty}^{k-1} (q; q^2)_{\infty}^k} \sum_{n \ge 0} q^{8n^2+8n+2} (1 - q^{8n+6}) \nonumber \\
				&= \frac{1}{(q^2;q^2)_{\infty}^{k-2}(q; q^2)_{\infty}^{k-1}} \sum_{n \ge 0} \frac{q^{2+6+10+\dots+(8n-2)}}{	 \prod\limits_{\substack{j=1 \\ j \neq 8n+6}}^{\infty} (1 - q^{j})}\nonumber\\
				&=\sum_{n=0}^{\infty}\bar{p}_{4,2}(n,k)q^n.
			\end{align}
			This establishes the Theorem \ref{gen3}.
			\end{proof}
				\begin{proof}[\textbf{Corollary \textup{\ref{gc4}}}][]
				This immediately follows by taking $k=2$ in Theorem \ref{gen3}.
			\end{proof}
			\section{Concluding remarks}
			We propose the following questions for further investigation:
			\begin{itemize}
				\item Can we provide a bijective proof for any of the theorem presented in this paper?
				\item In this paper we prove that 
				$$C_{k}^e(n)=\mathcal{P}_{2,1}(n,k),$$
				which can be considered as a natural extension of the theorem provided by Andrews and Newman \cite[Theorem 4]{mex}
					$$\mathcal{P}_{2,1}(n,1)=C_1^e(n)=P_e(n).$$
	It would be great to find partition-theoretic object which is able to extends the following resuls by Andrews and Newman \cite[Theorem 2,~3]{mex},
	\begin{itemize}
		\item [1.]  $\mathcal{P}_{1,1}(n,1)$ equals the number of partitions of $n$ with non-negative crank.
		\item [2.] $\mathcal{P}_{3,3}(n,1)$ equals the number of partitions of $n$ with rank $\ge1.$
	\end{itemize}	 
			\end{itemize}

			\medskip
			
			\noindent {\bf{Acknowledgements:}}\,
			The author is thankful to Professor Rahul Kumar for suggesting this problem and for his various important suggestions throughout the work.
	
\end{document}